\newtheorem{lemma}{Lemma}
\newtheorem{theorem}{Theorem}
\newtheorem{corollary}{Corollary}
\newtheorem{proposition}{Proposition}
\theoremstyle{definition}
\newtheorem{definition}{Definition}
\theoremstyle{convention}
\theoremstyle{example}
\theoremstyle{remark}
\newtheorem{remark}{Remark}
\numberwithin{equation}{section}
\def\A{{{\mathbb A}}}
\def\I{{{\mathbb I}}}
\def\G{{{\mathbb G }}}
\def\E{{{\mathbb E }}}
\def\Z{{{\mathbb Z }}}
\def\P{{{\mathbb P }}}
\def\L{{{\mathbb L }}}
\def\U{{{\mathbb U}}}
\def\V{{{\mathbb V }}}
\def\CC{{{\mathcal C}}}
\def\EE{{{\mathcal E}}}
\def\FF{{{\mathcal F}}}
\def\II{{{\mathcal I}}}
\def\LL{{{\mathcal L}}}
\def\MM{{{\mathcal M}}}
\def\NN{{{\mathcal N}}}
\def\OO{{{\mathcal O}}}
\def\QQ{{{\mathcal Q}}}
\def\TT{{{\mathcal T}}}
\def\WW{{{\mathcal W}}}
\def\EExt{{{\mathcal E}xt}}
\def\FFitt{{{\mathcal F}itt}}
\def\Spec{{{\rm Spec \,}}}
\def\Sing{{{\rm Sing \,}}}
\def\Proj{{{\rm Proj \,}}}
\def\Hilb{{{\rm Hilb \,}}}
\def\Univ{{{\rm Univ \,}}}
\def\Grass{{{\rm Grass }}}
\def\Quot{{{\rm Quot \,}}}
\def\Supp{{{\rm Supp \,}}}
\def\Pic{{{\rm Pic \,}}}
\def\dim{{{\rm dim \,}}}
\def\codim{{{\rm codim \,}}}
\def\hd{{{\rm hd \,}}}
\def\coker{{{\rm coker \,}}}
\def\ker{{{\rm ker \,}}}
\def\rank{{{\rm rank \,}}}
\def\im{{{\rm im \,}}}
\def\id{{{\rm id }}}
\def\tors{{{\rm tors \,}}}
\def\red{{{\rm red}}}
\begin{document}
\renewcommand{\refname}{References}
\renewcommand{\proofname}{Proof.}
\thispagestyle{empty}

\title[Admissible Pairs vs Gieseker--Maruyama]{Admissible Pairs vs Gieseker--Maruyama}
\author{{N.V.Timofeeva}}
\address{Nadezda Vladimirovna Timofeeva
\newline\hphantom{iii} Yaroslavl State University,
\newline\hphantom{iii} ul. Sovetskaya, 14,
\newline\hphantom{iii} 150000, Yaroslavl, Russia}%
\email{ntimofeeva@list.ru}%
\maketitle{\footnote{This work is done as a part of
the initiative project <<Mathematical methods of optimization
in continuous and discrete systems>> (NIR VIP-008) of Yaroslavl
State University}\vspace{1cm} \small
\begin{quote}
\noindent{\sc Abstract. } A morphism of the moduli functor  of
admissible semistable pairs  to the  Gieseker -- Maruyama moduli
functor (of semistable coherent torsion-free sheaves)  with the
same Hilbert polynomial on the surface, is constructed. It is
shown that these functors are isomorphic, and the moduli scheme
for semistable admissible pairs $((\widetilde S, \widetilde L),
\widetilde E)$ is isomorphic to  the Gieseker -- Maruyama moduli
scheme. The considerations involve all components of moduli
functors and corresponding moduli scheme as they exist.
\medskip

\noindent{\bf Keywords:} moduli space, semistable coherent
sheaves, semistable \linebreak admissible pairs, moduli functor,
vector bundles, algebraic surface.
 \end{quote}
%}

\bigskip

\begin{flushright}
{\it To the blessed memory of my Mom}
\end{flushright}

\section*{Introduction}
In this article we complete the investigation of the compactification of
moduli of stable vector bundles on a surface by locally free
sheaves by examining the whole of moduli of admissible semistable
pairs. Various aspects of the construction of main components
(constituting the compactification of moduli of vector bundles)
 and basic properties were given in preceding papers
of the author \cite{Tim0} -- \cite{Tim9}. In the cited articles
the key restriction was that all families under
consideration include so-called $S$-pairs. In this article we
eliminate this restriction.

Let $S$ be a smooth irreducible projective algebraic surface over
a field $k=\overline k$ of zero characteristic, $\OO_S$ its
structure sheaf, $E$ coherent torsion-free $\OO_S$-module,
$E^{\vee}:={{\mathcal H}om}_{{\mathcal O}_S}(E, {\mathcal O}_S)$
its dual ${\mathcal O}_S$-module. $E^{\vee}$ is reflexive and
hence locally free. A locally free sheaf and its corresponding
vector bundle are canonically identified and both terms are used
as synonyms. Let  $L$ be very ample invertible sheaf on $S$; it is
fixed and is used as a polarization. The symbol $\chi(\cdot)$
denotes Euler -- Poincar\'{e} characteristic, $c_i(\cdot)$
 $i$-th Chern class.

\begin{definition} \label{admsch} \cite{Tim3, Tim4} Polarized algebraic scheme
$(\widetilde S, \widetilde L)$ is called {\it admissible} if it
satisfies one of the following conditions

i) $(\widetilde S, \widetilde L) \cong (S,L)$,

ii) $\widetilde S \cong {\rm Proj \,} \bigoplus_{s\ge
0}(I[t]+(t))^s/(t^{s+1})$ where $I={{\mathcal F}itt}^0 {{\mathcal
E}xt}^2(\varkappa, {\mathcal O}_S)$ for Artinian quotient sheaf
 $q_0: \bigoplus^r {\mathcal
O}_S\twoheadrightarrow \varkappa$ of length $l(\varkappa)\le c_2$,
and $\widetilde L = L \otimes (\sigma ^{-1} I \cdot {\mathcal
O}_{\widetilde S})$ is very ample invertible sheaf on the scheme
$\widetilde S$; this polarization  $\widetilde L$ is called  {\it
distinguished polarization}.
\end{definition}

Now discuss and/or recall several notions and objects involved in
this definition.

Recall the definition of a sheaf of 0-th Fitting ideals known from
commutative algebra. Let $X$ be a scheme, $F$ $\OO_X$-module of
finite presentation $F_1 \stackrel{\varphi}{\longrightarrow} F_0
\to F$. Without loss of generality we assume that  $\rank F_1 \ge
\rank F_0$.
\begin{definition} {\it The sheaf of 0-th Fitting ideals } of
$\OO_X$-module $F$ is defined as  $\FFitt^0 F =\im
(\bigwedge^{\rank F_0} F_1 \otimes \bigwedge^{\rank F_0}
F_0^{\vee} \stackrel{\varphi'}{\longrightarrow}\OO_X)$, where
$\varphi'$ is a morphism of  $\OO_X$-modules induced by $\varphi$.
\end{definition}

\begin{remark} In further considerations we replace $L$ by its
big enough tensor power, if necessary for $\widetilde L$ to be
very ample. This power can be chosen uniform and fixed, as shown
in \cite{Tim4}. All Hilbert polynomials are compute according to
new $L$ and $\widetilde L$ respectively.
\end{remark}

As shown in \cite{Tim3}, if $\widetilde S$ satisfies the condition
(ii) in the definition \ref{admsch}, it is decomposed into the
union of several components $\widetilde S=\bigcup_{i\ge
0}\widetilde S_i$. It has a morphism $\sigma: \widetilde S \to S$
which is induced by the structure of $\OO_S$-algebra on the graded
object $\bigoplus_{s\ge 0}(I[t]+(t))^s/(t^{s+1})$. The scheme
$\widetilde S$ can be produced as follows. Take a product $\Spec
k[t] \times S$ and its blowing up $Bl_{\II}\Spec k[t] \times S$ in
the sheaf of ideals $\II=(t)+I[t]$ corresponding to the subscheme
with ideal $I$ in the zero-fibre $0\times S$. If
$\sigma\!\!\!\sigma: Bl_{\II}\Spec k[t] \times S \to {\Spec k[t]
\times S}$ is the blowup morphism then $\widetilde S$ is a
zero-fibre of the composite $$pr_1\circ \sigma\!\!\!\sigma:
Bl_{\II}\Spec k[t] \times S \to {\Spec k[t] \times S} \to \Spec
k[t].$$

\begin{definition} \cite{Tim4} $S$-{\it stable}
(respectively, {\it semistable}) {\it pair} $((\widetilde
S,\widetilde L), \widetilde E)$ is the following data:
\begin{itemize}
\item{$\widetilde S=\bigcup_{i\ge 0} \widetilde S_i$ --
admissible scheme, $\sigma: \widetilde S \to S$ morphism which is
called {\it canonical}, $\sigma_i: \widetilde S_i \to S$ its
restrictions on components $\widetilde S_i$, $i\ge 0;$}
\item{$\widetilde E$ vector bundle on the scheme
$\widetilde S$;}
\item{$\widetilde L \in \Pic \widetilde S$ distinguished polarization;}
\end{itemize}
such that
\begin{itemize}
\item{$\chi (\widetilde E \otimes \widetilde
L^n)=rp(n),$ the polynomial $p(n)$ and the rank $r$ of the sheaf
$\widetilde E$ are fixed;}
\item{the sheaf $\widetilde E$ on the scheme $\widetilde S$ is {\it
stable } (respectively, {\it semistable}) {\it due to Gieseker,}
i.e. for any proper subsheaf $\widetilde F \subset \widetilde E$
for $n\gg 0$
\begin{eqnarray*}
\frac{h^0(\widetilde F\otimes \widetilde L^n)}{{\rm rank\,} F}&<&
\frac{h^0(\widetilde E\otimes \widetilde L^n)}{{\rm rank\,} E},
\\ (\mbox{\rm respectively,} \;\;
\frac{h^0(\widetilde F\otimes \widetilde L^n)}{{\rm rank\,}
F}&\leq& \frac{h^0(\widetilde E\otimes \widetilde L^n)}{{\rm
rank\,} E}\;);
\end{eqnarray*}}
\item{on each of additional components $\widetilde S_i, i>0,$
the sheaf $\widetilde E_i:=\widetilde E|_{\widetilde S_i}$ is {\it
quasi-ideal,} i.e. admits a description of the form
\begin{equation}\label{quasiideal}\widetilde E_i=\sigma_i^{\ast}
\ker q_0/tor\!s_i.\end{equation} for some $q_0\in \bigsqcup_{l\le
c_2} {\rm Quot\,}^l \bigoplus^r {\mathcal O}_S$. }\end{itemize}
\end{definition}
The definition of the subsheaf $tor\!s_i$ will be given below.

Pairs $(( \widetilde S, \widetilde L), \widetilde E)$ such that
 $(\widetilde S, \widetilde
L)\cong (S,L)$ will be called {\it $S$-pairs}.

In the series of articles of the author \cite{Tim0}
--- \cite{Tim4} a projective algebraic scheme
$\widetilde M$ is built up as reduced moduli scheme of
$S$-semistable admissible pairs and in \cite{Tim6} it is
constructed as possibly nonreduced moduli space.

The scheme  $\widetilde M$ contains an open subscheme  $\widetilde
M_0$ which is isomorphic to the subscheme $M_0$ of
Gieseker-semistable vector bundles in the Gieseker -- Maruyama
moduli scheme $\overline M$ of torsion-free semistable sheaves
whose Hilbert polynomial is equal to  $\chi(E \otimes L^n)=rp(n)$.
The following definition of Gieseker-semistability is used.

\begin{definition}\cite{Gies} The coherent  ${\mathcal O}_S$-sheaf $E$ is {\it
stable} (respectively, {\it semistable}) if for any proper
subsheaf  $F\subset E$ of rank $r'={\rm rank\,} F$ for $n\gg 0$
$$
\frac{\chi(E \otimes L^n)}{r}>\frac{\chi(F\otimes L^n)}{r'},\;
{\mbox{\LARGE (}}{\mbox{\rm respectively,}} \; \frac{\chi(E
\otimes L^n)}{r}\ge \frac{\chi(F\otimes L^n)}{r'}{\mbox{\LARGE
)}}.
$$
\end{definition}

Let  $E$ be a semistable locally free sheaf. Then, obviously, the
sheaf  $I={{\mathcal F}itt}^0 {{\mathcal E}xt}^1(E, {\mathcal
O}_S)$ is trivial and $\widetilde S \cong S$. In this case
$((\widetilde S, \widetilde L), \widetilde E) \cong ((S, L),E)$
and we have a bijective correspondence $\widetilde M_0 \cong M_0$.

Let $E$ be a semistable nonlocally free coherent sheaf; then the
scheme  $\widetilde S$ contains reduced irreducible component
$\widetilde S_0$ such that the morphism
$\sigma_0:=\sigma|_{\widetilde S_0}: \widetilde S_0 \to S$ is a
morphism of blowing up of the scheme $S$ in the sheaf of ideals
$I= {{\mathcal F}itt}^0 {{\mathcal E}xt}^1(E, {\mathcal O}_S).$
Formation of a sheaf  $I$ is an approach to the characterization
of singularities of the sheaf $E$ i.e. its difference from a
locally free sheaf. Indeed, the quotient sheaf $\varkappa:=
E^{\vee \vee}/ E$ is Artinian of length not greater then $c_2(E)$,
and ${{\mathcal E}xt}^1(E, {\mathcal O}_S) \cong {{\mathcal
E}xt}^2(\varkappa, {\mathcal O}_S).$ Then ${{\mathcal F}itt}^0
{{\mathcal E}xt}^2(\varkappa, {\mathcal O}_S)$ is a sheaf of
ideals of  (in general case nonreduced) subscheme $Z$ of bounded
length \cite{Tim6} supported at finite set of points on the
surface $S$. As it is shown in \cite{Tim3}, others irreducible
components $\widetilde S_i, i>0$ of the scheme  $\widetilde S$ in
general case carry nonreduced scheme structure.

Each semistable coherent torsion-free sheaf $E$ corresponds to a
pair  $((\widetilde S, \widetilde L), \widetilde E)$ where
$(\widetilde S, \widetilde L)$  defined as described.

Now we describe the construction of the subsheaf $tor\!s$ in
(\ref{quasiideal}). Let $U$ be Zariski-open subset in one of
components  $\widetilde S_i, i\ge 0$, and
$\sigma^{\ast}E|_{\widetilde S_i}(U)$ correspond\-ing group of
sections. This group is ${\mathcal O}_{\widetilde S_i}(U)$-module.
Sections  $s\in \sigma^{\ast}E|_{\widetilde S_i}(U)$ annihilated
by prime ideals of positive codimensions in  ${\mathcal
O}_{\widetilde S_i}(U)$, form a submodule in
$\sigma^{\ast}E|_{\widetilde S_i}(U)$. This submodule is denoted
as  $tor\!s_i(U)$. The correspondence
 $U \mapsto tor\!s_i(U)$ defines a subsheaf $tor\!s_i
\subset \sigma^{\ast}E|_{\widetilde S_i}.$ Note that associated
primes of positive codimensions which annihilate sections $s\in
\sigma^{\ast}E|_{\widetilde S_i}(U)$, correspond to subschemes
supported in the preimage $\sigma^{-1}({\rm Supp\,}
\varkappa)=\bigcup_{i>0}\widetilde S_i.$ Since by the construction
the scheme  $\widetilde S=\bigcup_{i\ge 0}\widetilde S_i$ is
connected \cite{Tim3}, subsheaves  $tor\!s_i, i\ge 0,$ allow to
construct a subsheaf $tor\!s \subset \sigma^{\ast}E$. The former
subsheaf is defined as follows. A section  $s\in
\sigma^{\ast}E|_{\widetilde S_i}(U)$ satisfies the condition $s\in
tor\!s|_{\widetilde S_i}(U)$ if and only if
\begin{itemize}
\item{there exist a section  $y\in {\mathcal O}_{\widetilde S_i}(U)$ such that
 $ys=0$,}
\item{at least one of the following two conditions is satisfied:
either  $y\in {\mathfrak p}$, where $\mathfrak p$ is prime ideal
of positive codimension; or there exist Zariski-open subset
$V\subset \widetilde S$ and a section  $s' \in \sigma^{\ast}E (V)$
such that  $V\supset U$, $s'|_U=s$, and $s'|_{V\cap \widetilde
S_0} \in$\linebreak $tor\!s (\sigma^{\ast}E|_{\widetilde
S_0})(V\cap \widetilde S_0)$. In the former expression the torsion
subsheaf $tor\!s(\sigma^{\ast}E|_{\widetilde S_0})$ is understood
in usual sense.}
\end{itemize}

The role of the subsheaf $tor\!s \subset \sigma^{\ast}E$ in our
construction is analogous to the role of torsion subsheaf in the
case of reduced and irreducible base scheme. Since no confusion
occur, the symbol  $tor\!s$ is understood everywhere in described
sense. The subsheaf $tor\!s$ is called a {\it torsion subsheaf}.

In \cite{Tim4} it is proven that sheaves $\sigma^{\ast} E/tor\!s$
are locally free. The sheaf $\widetilde E$ include in the pair
$((\widetilde S, \widetilde L), \widetilde E)$ is defined by the
formula $\widetilde E = \sigma^{\ast}E/tor\!s$. In this
circumstance there is an isomorphism $H^0(\widetilde S, \widetilde
E \otimes \widetilde L) \cong H^0(S,E\otimes L).$

In the same article it was proven that the restriction of the
sheaf  $\widetilde E$ to each of components $\widetilde S_i$,
$i>0,$ is given by the quasi-ideality relation (\ref{quasiideal})
where $q_0: {\mathcal O}_S^{\oplus r}\twoheadrightarrow \varkappa$
is an epimorphism defined by the exact triple  $0\to E \to E^{\vee
\vee} \to \varkappa \to 0$ in view of local freeness of the sheaf
$E^{\vee \vee}$.

Resolution of singularities of a semistable sheaf $E$ can be
globalized in a flat family by means of the construction developed
in various versions in  \cite{Tim1, Tim2, Tim4, Tim8}. Let $T$ be
a  scheme, ${\mathbb E}$ a sheaf of  ${\mathcal O}_{T\times
S}$-modules, ${\mathbb L}$ invertible ${\mathcal O}_{T\times
S}$-sheaf very ample relative to $T$ and such that  ${\mathbb
L}|_{t\times S}=L$, and $\chi({\mathbb E} \otimes {\mathbb
L}^n|_{t\times S})=rp(n)$ for all closed points $t\in T$. We also
assume that $\E$ and $\L$ are flat relative to $T$ and $T$
contains nonempty open subset $T_0$ such that ${\mathbb
E}|_{T_0\times S}$ is locally free ${\mathcal O}_{T_0 \times
S}$-module. Then following objects are defined:
\begin{itemize}
\item{$\pi: \widetilde \Sigma \to \widetilde T$ flat family of admissible
schemes with invertible ${\mathcal O}_{\widetilde \Sigma}$-module
$\widetilde {\mathbb L}$ such that  $\widetilde {\mathbb
L}|_{t\times S}$ distinguished polarization of the scheme
$\pi^{-1}(t)$,}
\item{$\widetilde {\mathbb E}$ locally free
${\mathcal O}_{\widetilde \Sigma}$-module and $((\pi^{-1}(t),
\widetilde {\mathbb L}|_{\pi^{-1}(t)}),  \widetilde {\mathbb
E}|_{\pi^{-1}(t)})$ is $S$-semistable admiss\-ible pair.}
\end{itemize}
In this situation there is a blowup morphism $\Phi: \widetilde
\Sigma \to
 \widetilde T \times S$.

The mechanism described was called a {\it standard resolution}.

\smallskip

In the present article we prove  following results.
\begin{theorem}\label{thfunc} (i) There is a natural transformation
$\underline \kappa: {\mathfrak f}^{GM} \to {\mathfrak f}$ of
Gieseker -- Maruyama moduli functor  to moduli functor of
admissible semistable pairs with same rank and Hilbert polynomial.

(ii) There is a natural transformation $\underline \tau:
{\mathfrak f} \to {\mathfrak f}^{GM}$ of the moduli functor of
admissible semistable pairs  to Gieseker --
Maruyama moduli functor for sheaves with same rank and Hilbert
polynomial.

(iii) Natural transformations $\underline \kappa$ and $\underline
\tau$ are mutually inverse.
 Hence both morphisms of nonreduced moduli functors $\underline \kappa:
\mathfrak f^{GM}\to \mathfrak f$ and $\underline \tau: {\mathfrak
f} \to {\mathfrak f}^{GM}$ are isomorphisms.
\end{theorem}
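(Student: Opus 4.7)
The plan is to build the two natural transformations explicitly and then verify that on each family they are mutually inverse, using the standard resolution in one direction and direct image along the canonical morphism $\sigma$ in the other.

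For $\underline\kappa$ I would proceed as follows. Given a family $\E$ representing a $T$-point of $\mathfrak f^{GM}$, namely a sheaf on $T\times S$ flat over $T$ with Gieseker-semistable torsion-free fibres of Hilbert polynomial $rp(n)$, apply the standard resolution recalled in the introduction. This produces a flat family $\pi\colon\widetilde\Sigma\to T$ of admissible schemes with relatively very ample $\widetilde{\mathbb L}$, a locally free $\OO_{\widetilde\Sigma}$-module $\widetilde{\mathbb E}$, and a blowup morphism $\Phi\colon\widetilde\Sigma\to T\times S$; over the locally free locus $T_0\subset T$ the map $\Phi$ is an isomorphism and $\widetilde{\mathbb E}=\Phi^{\ast}\E$. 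That each fibre is an $S$-semistable admissible pair is precisely the content of the results from \cite{Tim4, Tim8}. Naturality in $T$ reduces to the compatibility of $\FFitt^0 \EExt^1(-,\OO_S)$, of the ideal $(t)+I[t]$, and of the blowup with flat base change on $T$, together with the uniform choice of tensor power of $L$ noted in the remark following Definition \ref{admsch}.

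For $\underline\tau$ I would push forward along $\Phi$. Given a family $((\widetilde\Sigma\to T,\widetilde{\mathbb L}),\widetilde{\mathbb E})$ representing a $T$-point of $\mathfrak f$, set $\E:=\Phi_{\ast}\widetilde{\mathbb E}$. The fibrewise computation is $\sigma_{\ast}\widetilde E$, where $\widetilde E=\sigma^{\ast}E/tor\!s$ on the main component $\widetilde S_0$ and is determined by the quasi-ideality relation (\ref{quasiideal}) on the additional components $\widetilde S_i$, $i>0$. Since $\sigma_0\colon\widetilde S_0\to S$ is a blowup (so $\sigma_{0\ast}\OO_{\widetilde S_0}=\OO_S$) and the additional components $\widetilde S_i$, $i>0$, are contracted to the finite set $\Supp\varkappa$, the sheaf $\sigma_{\ast}\widetilde E$ is torsion-free on $S$ with the same Hilbert polynomial (this is exactly the isomorphism $H^0(\widetilde S,\widetilde E\otimes\widetilde L^n)\cong H^0(S,E\otimes L^n)$ already noted). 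Flatness over $T$ follows from the constancy of $\chi$ and torsion-freeness; Gieseker-semistability follows by pulling back any destabilizing subsheaf of $\E_t$ along $\sigma_t$ and using the cited comparison of Hilbert polynomials.

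It remains to show $\underline\tau\circ\underline\kappa=\id_{\mathfrak f^{GM}}$ and $\underline\kappa\circ\underline\tau=\id_{\mathfrak f}$, and it is enough to check these on closed points of bases. The first identity says $\sigma_{\ast}(\sigma^{\ast}E/tor\!s)=E$ for a torsion-free semistable $E$; using the exact triple $0\to E\to E^{\vee\vee}\to\varkappa\to 0$, the projection formula for $\sigma_0$, and the fact that the contribution from the higher components $\widetilde S_i$, $i>0$, exactly cancels the pushforward of $\varkappa$ (because by (\ref{quasiideal}) the restrictions $\widetilde E_i$ are built from $\ker q_0$ with the same $q_0\colon\OO_S^{\oplus r}\twoheadrightarrow\varkappa$), gives the desired equality. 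The second identity starts with a pair, takes $E=\sigma_{\ast}\widetilde E$, and asks that its standard resolution reproduce $((\widetilde S,\widetilde L),\widetilde E)$. The ideal $I=\FFitt^0\EExt^2(\varkappa,\OO_S)$ is recovered from $E$ by the same exact triple, so the underlying admissible scheme $\widetilde S$ and polarization $\widetilde L$ are reproduced on the nose; the sheaf $\widetilde E$ is then pinned down by the quasi-ideality relation together with its behaviour on $\widetilde S_0$.

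The main obstacle is this last step on the nonreduced additional components $\widetilde S_i$, $i>0$. One must show that the pair $((\widetilde S,\widetilde L),\widetilde E)$ is \emph{rigidly} determined by $E$, so that the round trip produces an equality and not merely an isomorphism up to twist or up to the non-reduced structure: this requires a careful comparison of the Artinian quotient $q_0$ arising from $E^{\vee\vee}/E$ with the $q_0$ built into (\ref{quasiideal}), use of the definition of $tor\!s$ across the intersections $\widetilde S_0\cap\widetilde S_i$, and control of the scheme structure on $\widetilde S_i$ produced from $\bigoplus_{s\ge 0}(I[t]+(t))^s/(t^{s+1})$. Once rigidity is established on closed points, the families statement, and hence the natural isomorphism of the possibly non-reduced functors, follows by the flatness and base-change compatibility of the standard resolution.
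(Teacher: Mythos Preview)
Your outline has the right overall shape, but it skips over the three technical points that are the actual content of this paper. For $\underline\kappa$, the standard resolution ``recalled in the introduction'' and in \cite{Tim4,Tim8} explicitly assumes $T$ contains a nonempty open $T_0$ over which $\E$ is locally free. Dropping that hypothesis is the whole point here: if no such $T_0$ exists, $\Sing\E$ can have codimension $2$ in $T\times S$, and blowing up $\FFitt^0\EExt^1(\E,\OO_{T\times S})$ directly produces fibres over $T$ that are not equidimensional, hence not admissible schemes. The paper repairs this by embedding $\Sigma=T\times S$ as the zero fibre in $\Sigma\times\A^1$, blowing up the ideal $(t)+I[t]$ there, and restricting; flatness of the resulting $\pi$ then requires the Raynaud--Gruson platification theorem. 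You have not addressed this. For $\underline\tau$, a family in $\mathfrak F_T$ does \emph{not} come with a morphism $\Phi\colon\widetilde\Sigma\to T\times S$; the data is only birational $S$-triviality, i.e.\ an identification of open subschemes $\widetilde\Sigma_0\cong\Sigma_0$. The paper must first construct $\phi$ (via an auxiliary invertible sheaf $\L'$ and a factorization through $i_\L(T\times S)\subset\P(p_\ast\L)^\vee$), and even then defines $\E:=\phi_\ast(\widetilde\E\otimes\widetilde\L^m)\otimes\L^{-m}$ for $m\gg 0$ rather than $\phi_\ast\widetilde\E$, because the latter is not obviously $T$-flat.

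Finally, your claim that ``it is enough to check these on closed points of bases'' is false for nonreduced $T$: two $T$-flat families can agree on every closed fibre and still differ. The paper proves the global identities $\underline\tau\circ\underline\kappa=\id$ and $\underline\kappa\circ\underline\tau=\id$ by embedding the two candidate families into relative $\Quot$ (respectively $\Grass$ and $\Hilb$) schemes over $T$ and showing their images coincide scheme-theoretically, using flatness of both images over $T$ and equality of fibrewise Hilbert polynomials. Your pointwise rigidity argument on the additional components $\widetilde S_i$ is in the right spirit for the fibrewise step, but it does not by itself yield the statement over nonreduced bases.
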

\begin{corollary}\label{thsch}
The  nonreduced moduli scheme $\widetilde M$ for ${\mathfrak f}$
is isomorphic to the  nonreduced Gieseker -- Maruyama scheme
$\overline M$ for sheaves with same rank and Hilbert polynomial.
\end{corollary}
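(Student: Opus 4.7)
The plan is to deduce the corollary formally from Theorem \ref{thfunc} using the universal (corepresenting) property of the two moduli schemes. By construction, the Gieseker--Maruyama scheme $\overline M$ corepresents the functor $\mathfrak f^{GM}$, and by the results cited from \cite{Tim6} the (possibly nonreduced) scheme $\widetilde M$ corepresents $\mathfrak f$. Thus both objects are determined, up to unique isomorphism, by their associated moduli functors, and the main work is to convert the functor-level isomorphism into a scheme-level one.

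First I would record the universal natural transformations $\underline{u}^{GM}\colon \mathfrak f^{GM}\to \Hom(-,\overline M)$ and $\underline{u}\colon \mathfrak f\to \Hom(-,\widetilde M)$ coming from corepresentability. Composing with the isomorphisms from Theorem \ref{thfunc} yields natural transformations
\[
\underline{u}^{GM}\circ \underline\kappa\colon \mathfrak f^{GM}\to \Hom(-,\widetilde M),\qquad
\underline{u}\circ \underline\tau\colon \mathfrak f\to \Hom(-,\overline M).
\]
Applying the corepresenting property of $\overline M$ to the first, and of $\widetilde M$ to the second, produces canonical morphisms of schemes
\[
\tau\colon \overline M\to \widetilde M,\qquad \kappa\colon \widetilde M\to \overline M.
\]

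Next I would verify that $\tau\circ\kappa=\id_{\widetilde M}$ and $\kappa\circ\tau=\id_{\overline M}$. Each composition, read back as a natural transformation on functors of points, is obtained from the pair $\underline\tau\circ\underline\kappa$ or $\underline\kappa\circ\underline\tau$, which by Theorem \ref{thfunc}(iii) are the identity transformations of $\mathfrak f^{GM}$ and $\mathfrak f$ respectively. By the uniqueness in the corepresenting property, the only endomorphism of $\widetilde M$ (respectively $\overline M$) compatible with the identity transformation is $\id$, and the desired equalities follow. Hence $\kappa$ and $\tau$ are mutually inverse isomorphisms of schemes.

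The only genuine point of care is that one really needs corepresentability in the nonreduced category, not merely a bijection on closed points: the whole scheme structure (including nilpotents) must be recovered from the functor. This is ensured by the constructions of $\overline M$ in \cite{Gies} and of $\widetilde M$ in \cite{Tim6}, which produce these schemes precisely as coarse moduli spaces of the respective functors. Once this is in hand, no further calculation is needed and the corollary follows.
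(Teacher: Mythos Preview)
Your argument is essentially the standard deduction that isomorphic functors have isomorphic corepresenting objects, and this is exactly how the paper treats the corollary: no separate proof is given, it is regarded as an immediate consequence of Theorem~\ref{thfunc} together with the existence of the two coarse moduli schemes.

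Two small points. First, in your displayed composites the labels are swapped: one needs $\underline{u}\circ\underline\kappa\colon \mathfrak f^{GM}\to \Hom(-,\widetilde M)$ and $\underline{u}^{GM}\circ\underline\tau\colon \mathfrak f\to \Hom(-,\overline M)$ for the types to match; with this correction the rest of your argument (factoring through the corepresenting object, then using $\underline\tau\circ\underline\kappa=\id$ and $\underline\kappa\circ\underline\tau=\id$ together with uniqueness) goes through verbatim. Your naming of the resulting scheme morphisms $\tau,\kappa$ is also reversed relative to the paper's convention, but that is harmless.

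Second, you should be slightly careful with the sentence ``by the results cited from \cite{Tim6} the scheme $\widetilde M$ corepresents $\mathfrak f$'': the earlier papers worked under the restriction that families contain $S$-pairs, so strictly speaking what was previously established is corepresentability for that restricted functor. The present paper removes this restriction precisely by proving $\mathfrak f\cong\mathfrak f^{GM}$; once that is in hand, $\overline M$ automatically corepresents $\mathfrak f$, and the identification with the previously constructed $\widetilde M$ follows. Phrased either way, the conclusion is the same and your formal argument is correct.
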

In section 1 we remind definitions of the functor  $\mathfrak
f^{GM}$ of moduli of coherent torsion-free sheaves ("Gieseker --
Maruyama functor") (\ref{funcGM}, \ref{famGM}) and of the functor
$\mathfrak f$ of moduli of admissible semistable pairs
(\ref{funcmy}, \ref{class}). The rank $r$ and polynomial $p(n)$
are fixed and equal for both moduli functors.

Then in section 2 we give the
transformation of the family of coherent torsion-free sheaves $(T,
\L, \E)$ with base scheme $T$ into a family of admissible semistable pairs $((T, \pi:
\widetilde \Sigma \to T, \widetilde \L),\widetilde \E)$. This
transformation generalizes the procedure of standard resolution
for the case when the initial family is not obliged to contain
locally free sheaves. It leads to the functorial morphism
$\underline \kappa: {\mathfrak f}^{GM} \to {\mathfrak f}$ and proves
part {\it i)} of the Theorem \ref{thfunc}.

After that, in section 3 we give the description of the
transformation of a family of semistable admissible pairs
 $((\pi: \widetilde \Sigma \to T, \widetilde \L),
\widetilde \E)$ with (possibly, nonreduced) base scheme $T$ to a
family $\E$ of coherent torsion-free semistable sheaves with the
same base $T$. The transformation provides a morphism of the
functor $\underline \tau$ of admissible semistable pairs ${\mathfrak f}$ to Gieseker
-- Maruyama functor $\mathfrak f^{GM}$ and proves part {\it ii)} of Theorem \ref{thfunc}.

In section 4 we show that  morphisms of functors $\underline \kappa: {\mathfrak
f}^{GM} \to {\mathfrak f}$ and $\underline \tau:{\mathfrak f} \to  {\mathfrak
f}^{GM}$ we constructed are mutually inverse. In this way
the functors of interest are isomorphic and this completes the proof
of Theorem \ref{thfunc}.

\smallskip
%%%%%%%%%%%%%%%%%%%%%%%%%%%%%%%%%%%%%%%%%%%%%%%%%%%%%%%%%%%%%%%%%
%%%%%%%%%%%%%%%%%%%%%%%%%%%%%%%%%%%%%%%%%%%%%%%%%%%%%%%%%%%%%%%%%

\section{Moduli functors}

Following \cite[ch. 2, sect. 2.2]{HL}, we recall some definitions.
Let ${\mathcal C}$ be a category, ${\mathcal C}^o$ its dual,
 ${\mathcal C}'={{\mathcal F}unct}({\mathcal C}^o, Sets)$ category of
 functors to the category of sets. By Yoneda's lemma, the functor  ${\mathcal C} \to
{\mathcal C}': F\mapsto (\underline F: X\mapsto {\rm Hom
\,}_{{\mathcal C}}(X, F))$ includes ${\mathcal C}$ into ${\mathcal
C}'$ as full subcategory.

\begin{definition}\label{corep}\cite[ch. 2, definition 2.2.1]{HL}
The functor  ${\mathfrak f} \in {\OO}b\, \CC'$ is {\it
corepresented by the object} $M \in {\OO}b \,\CC$, if there exist
a $\CC'$-morphism $\psi : {\mathfrak f} \to \underline M$ such
that any morphism $\psi': {\mathfrak f} \to \underline F'$ factors
through the unique morphism  $\omega: \underline M \to \underline
F'$.
\end{definition}

\begin{definition} The scheme  $\widetilde M$ is a {\it coarse moduli space}
for the functor $\mathfrak f$ if  $\mathfrak f$ is corepresented
by the scheme
 $\widetilde M.$
 \end{definition}

Let $T,S$ be  schemes over a field $k$, $\pi: \widetilde \Sigma
\to T$ a morphism of $k$-schemes. We introduce the following

\begin{definition}\label{bitriv} The family of schemes
$\pi: \widetilde \Sigma \to T$ is {\it birationally $S$-trivial}
if there exist isomorphic open subschemes $\widetilde \Sigma_0
\subset \widetilde \Sigma$ and $\Sigma_0 \subset T\times S$ and
there is a scheme equality $\pi(\widetilde \Sigma_0)=T$.
%The family of schemes  $\pi:\widetilde \Sigma \to
%T$ is {\it birationally trivial with general fibre} $S$ if there
%exist isomorphic open subschemes  $\widetilde \Sigma_0 \subset
%\widetilde \Sigma$ and $\Sigma_0 \subset T\times S$ where
%$\widetilde \Sigma_0$ contains at least one fibre which is
%isomorphic to  $S$, and $\pi (\widetilde \Sigma_0)=T$.
\end{definition}
The former equality means that all fibres of the morphism $\pi$
have nonempty intersections with the open subscheme $\widetilde
\Sigma_0$.

In particular, if $T=\Spec k$ then $\pi$ is a constant morphism
and $\widetilde \Sigma_0 \cong \Sigma_0$ is open subscheme in $S$.

Since in the present paper we consider only $S$-birationally
trivial families, they will be referred to as {\it birationally
trivial} families.

We consider sets of families of semistable pairs
\begin{equation}\label{class}{\mathfrak F}_T= \left\{
\begin{array}{l}\pi: \widetilde \Sigma \to T \mbox{\rm \;\;birationally $S$-trivial},\\
\widetilde \L\in \Pic \widetilde \Sigma \mbox{\rm \;\;flat over
}T,\\
\mbox{\rm for }m\gg 0 \; \widetilde \L^m \; \mbox{\rm very ample
relatively }T,\\ \forall t\in T \;\widetilde L_t=\widetilde
\L|_{\pi^{-1}(t)}
\mbox{\rm \; ample;}\\
(\pi^{-1}(t),\widetilde L_t) \mbox{\rm \;admissible scheme with
distinguished polarization}; \\
\chi (\widetilde L_t^n) \mbox{\rm \; does not depend on }t,\\
 \widetilde \E \;\; \mbox{\rm locally free } \OO_{\Sigma}-\mbox{\rm
sheaf flat over } T;\\
 \chi(\widetilde \E\otimes\widetilde \L^{n})|_{\pi^{-1}(t)})=
 rp(n);\\
 ((\pi^{-1}(t), \widetilde L_t), \widetilde \E|_{\pi^{-1}(t)}) -
 \mbox{\rm semistable pair}
 \end{array} \right\} \end{equation}  and a functor
\begin{equation}\label{funcmy}\mathfrak f: (Schemes_k)^o
\to (Sets)\end{equation} from the category of $k$-schemes to the
category of sets. It attaches to any scheme $T$ the set of
equivalence classes of families of the form $({\mathfrak
F}_T/\sim).$

The equivalence relation  $\sim$ is defined as follows. Families
$((\pi: \widetilde \Sigma \to T, \widetilde \L), \widetilde \E)$
and \linebreak $((\pi': \widetilde \Sigma \to T, \widetilde \L'),
\widetilde
 \E')$ from the class  $\mathfrak F_T$ are said to be equivalent
 (notation:\linebreak
 $((\pi: \widetilde \Sigma \to T, \widetilde \L),
 \widetilde \E) \sim ((\pi': \widetilde \Sigma \to T, \widetilde \L'), \widetilde
 \E')$) if\\
 1) there exist an isomorphism $ \iota: \widetilde \Sigma \stackrel{\sim}{\longrightarrow}
 \widetilde \Sigma'$ such that the diagram \begin{equation*}
 \xymatrix{\widetilde \Sigma  \ar[rd]_{\pi}\ar[rr]_{\sim}^{\iota}&&\widetilde \Sigma ' \ar[ld]^{\pi'}\\
&T }
 \end{equation*} commutes.\\
 2) There exist line bundles   $L', L''$ on the scheme  $T$ such
 that $\iota^{\ast}\widetilde \E' = \widetilde \E \otimes \pi^{\ast} L',$
 $\iota^{\ast}\widetilde \L' = \widetilde \L \otimes \pi^{\ast} L''.$

%%\begin{remark} The definition of the functor of semistable admissible pairs
%%given here differs from definition from preceding papers
%%\cite{Tim4} -- \cite{Tim8}: we added new requirement of birational
%%triviality. This requirement was missed by the author before.
%%Indeed, without this require\-ment, the consideration of
%%"twisted"\, families of schemes is allowed. One can take even
%%twisted families with all fibres isomorphic to the surface $S$.
%%This makes sets ${\mathfrak F}_T/\sim$ {\it too big} in such a
%%sense that non-isomorphic families of schemes over a same base $T$
%%arise where corresponding fibres are isomorphic. For a simple
%%example take a projective plane  $S=\P^2$. Not any  $\P^2$-bundle
%%over the base $T$ is isomorphic to the product  $ T\times \P^2$.
%%This circumstance does not allow to conclude the isomorphism of
%%open subfunctor of Gieseker-semistable vector bundles, to an open
%%subfunctor corresponding to semistable $S$-pairs if the former is
%%considered without requirement of birational triviality. All the
%%results of articles \cite{Tim4} -- \cite{Tim8}  become true
%%together with their proofs when the requirement of birational
%%triviality is added. The result of \cite{Tim8} and its proof
%%remain true even without condition of birational triviality. It
%%provides the morphism of main components of Gieseker -- Maruyama
%%moduli scheme to the "bigger"\, moduli scheme.
%%\end{remark}

Now discuss what is the "size"\, of the maximal under  inclusion
of those open sub\-schemes $\widetilde \Sigma_0$ in a family of
admissible schemes $\widetilde \Sigma$, which are isomorphic to
appropriate open subschemes in $T\times S$ in the definition
\ref{bitriv}. The set $F=\widetilde \Sigma \setminus \widetilde
\Sigma_0$ is closed. If $T_0$ is open subscheme in $T$ whose
points carry fibres isomorphic to $S$, then $\widetilde \Sigma_0
\supsetneqq \pi^{-1}T_0$ (inequality is true because $\pi
(\widetilde \Sigma_0)=T$ in the definition \ref{bitriv}). The
subscheme $\Sigma_0$ which is open in $T\times S$ and isomorphic
to $\widetilde \Sigma_0$, is such that $\Sigma_0 \supsetneqq
T_0\times S$. If $\pi:\widetilde \Sigma \to T$ is family of
admissible schemes then $\widetilde \Sigma_0 \cong \widetilde
\Sigma \setminus F$, and $F$ is (set-theoretically) the union of
additional components of fibres which are non-isomorphic to $S$.
Particularly, this means that $\codim_{T\times S}
(T\times S)\setminus \Sigma_0 \ge 2.$

%\begin{remark} Also adding of the requirement of birational triviality
%with general fibre $S$ guarantees that moduli space for the
%functor of semistable admissible pairs does not contain
%"additional"\, components (i.e. components which do not contain
%$S$-pairs). $\widetilde M$ contains only components  of interest
%from the point of view of compactification of moduli of vector
%bundles.
%\end{remark}

\smallskip

The Gieseker -- Maruyama functor
\begin{equation}\label{funcGM}{\mathfrak f}^{GM}:
(Schemes_k)^o \to Sets,\end{equation} attaches to any scheme $T$
the set of equivalence classes of families of the following form
${\mathfrak F}_T^{GM}/\sim$, where
\begin{equation}\label{famGM} \mathfrak
F_T^{\,GM}= \left\{
\begin{array}{l} \E \;\;\mbox{\rm sheaf of } \OO_{T\times S}-
\mbox{\rm modules flat over } T;\\
\L \;\;\mbox{\rm invertible sheaf of } \OO_{T\times S}-\mbox{\rm
modules,}\\ \mbox{\rm  ample relatively to } T\\
\mbox{\rm and such that } L_t:=\L|_{t\times S}\cong L\; \mbox{\rm for any point } t\in T;\\
E_t:=\E|_{t\times S} \;\mbox{\rm torsion-free and Gieseker-semistable;}\\
%\mbox{\rm
%with respect to  } L_t:=\L|_{t\times S};\\
\chi(E_t \otimes L_t^n)=rp(n).\end{array}\right\}
\end{equation}

Families   $\E, \L$ and $\E',\L'$ from the class $\mathfrak
F^{GM}_T$ are said to be equivalent (notation: $ (\E, \L)\sim
(\E',\L')$), if there exist linebundles  $L', L''$ on the scheme
$T$ such that $\E' = \E \otimes p^{\ast} L',$ $ \L' =  \L \otimes
p^{\ast} L''$ where $p: T\times S \to T$ is projection onto the
first factor.

\begin{remark} Since $\Pic (T \times S) =\Pic T \times \Pic
S$, our definition of the moduli functor ${\mathfrak f}^{GM}$ is
equivalent to the standard definition which can be found, for
example, in \cite{HL}: the difference in choice of polarizations
$\L$ and $\L'$ having isomorphic restrictions on fibres over the
base $T$, is avoided by the equivalence which is induced by
tensoring by inverse image of an invertible sheaf $L''$ from the
base $T$.
\end{remark}
%%%%%%%%%%%%%%%%%%%%%%%%%%%%%%%%%%%%%%%%%%%%%%%%%%%%%%%%%%%%%%%%%
%%%%%%%%%%%%%%%%%%%%%%%%%%%%%%%%%%%%%%%%%%%%%%%%%%%%%%%%%%%%%%%%%

\section{GM-to-Pairs transformation (standard resolution)}

The morphism of functors  $\underline \kappa: \mathfrak f^{GM} \to
\mathfrak f$ is defined by commutative diagrams
\begin{equation}\label{morfun}\xymatrix{T \ar@{|->}[rd] \ar@{|->}[r]&
\mathfrak F^{GM}_T/\sim \ar[d]\\
& \mathfrak F_T/\sim}
\end{equation}
where $T\in {\mathcal O}b (Schemes)_k$, $\underline
\kappa(T):(\mathfrak F^{GM}_T/\sim) \to ( \mathfrak F_T/\sim)$ is
a morphism in the category of sets (mapping).

The aim of this section is to build up a transformation of the
family $(T, \L, \E)$ of semistable coherent torsion-free sheaves
to the family $((T, \pi: \widetilde \Sigma \to T, \widetilde \L),
\widetilde \E)$ of admissible semistable pairs. Since the initial family of sheaves $\E$ is not
obliged to contain at least one locally free sheaf then
codimension of singular locus $\Sing \E$ in $\Sigma=T\times S$ can
equal 2. Hence if the
blowing up $\sigma\!\!\!\sigma:\widetilde \Sigma \to \Sigma$ of
the sheaf of ideals $\FFitt^0 \EExt^1(\E, \OO_{\Sigma})$ is
considered then the fibres of the composite $p\circ
\sigma\!\!\!\sigma$ are not obliged to be equidimensional. Such a blowing up
cannot produce family of admissible schemes.
%%%%%%%%%%%%%%%%%%%%%%%%%%%%%%%%%%%%%%%%%%%%%%%%%%%%%%%%%%%%%%%%%
%%%%% МОДИФИКАЦИЯ СТАНДАРТНОГО РАЗРЕШЕНИЯ - УВЕЛИЧЕНИЕ БАЗЫ %%%%%
%%%%%%%%%%%%%%%%%%%%%%%%%%%%%%%%%%%%%%%%%%%%%%%%%%%%%%%%%%%%%%%%%

To overcome this difficulty we perform the following artifical but
obvious trick. Consider the product $\Sigma'=\Sigma \times \A^1$
and fix a closed immersion $i_0: \Sigma \hookrightarrow \Sigma'$
which identifies $\Sigma$ with zero fibre $\Sigma \times 0.$ Now
let $Z\subset \Sigma$  be a subscheme defined by the sheaf of
ideals $\I=\FFitt^0 \EExt^1(\E, \OO_{\Sigma})$. Then consider the
sheaf of ideals $\I':=\ker (\OO_{\Sigma'} \twoheadrightarrow
i_{0\ast} \OO_Z)$ and the blowup morphism $\sigma\!\!\!\sigma':
\widehat \Sigma' \to \Sigma'$ defined by the sheaf $\I'$. Denote
the projection onto the product of factors $\Sigma' \to
T\times \A^1=: T'$ by $p'$ and the composite $p' \circ
\sigma\!\!\!\sigma'$ by $\widehat \pi'$. We are interested in the
induced morphism $\pi: \widetilde \Sigma:=i_0(\Sigma)
\times_{\Sigma'} \widehat \Sigma' \to T$. Under
the identification $\Sigma \cong i_0(\Sigma)$ we denote by
$\sigma\!\!\!\sigma$ the induced morphism $\widehat \Sigma \to
\Sigma.$ Set $\L':= \L \boxtimes \OO_{\A^1}$. Obviously, there
exists $m\gg 0$ such that the invertible sheaf $\widehat \L':=
\sigma\!\!\!\sigma'^{\ast}{\L'}^m \otimes
({\sigma\!\!\!\sigma'}^{-1} \I') \cdot
\OO_{\widehat \Sigma'}$ is ample relatively to $\widehat \pi'$.
For brevity of notations we fix this $m$ and replace $L$ by its
$m$-th tensor product throughout further text. Denote
$\widetilde \L:=\widehat \L'|_{\widetilde \Sigma}.$
%%%%%%%%%%%%%%%%%%%%%%%%%%%%%%%%%%%%%%%%%%%%%%%%%%%%%%%%%%%%%%%
%%%%%%%%%% ПЛОСКОСТЬ ПОЛУЧЕННОГО СЕМЕЙСТВА СХЕМ %%%%%%%%%%%%%%%
%%%%%%%%%%%%%%%%%%%%%%%%%%%%%%%%%%%%%%%%%%%%%%%%%%%%%%%%%%%%%%%
\begin{proposition} The morphism $\pi: \widetilde \Sigma \to
\Sigma$ is flat and fibrewise Hilbert polynomial compute with
respect to $\widetilde \L$, i.e. $\chi (\widetilde
\L^n|_{\pi^{-1}(t)})$, is uniform over $t\in T$.
\end{proposition}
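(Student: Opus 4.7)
The plan is to exploit the fact that the $\A^1$-trick renders $\widetilde\Sigma$ a Cartier divisor in $\widehat\Sigma'$, so that flatness can be propagated from the generic locus where $\sigma\!\!\!\sigma'$ is an isomorphism, and then to read off constancy of the Hilbert polynomial from flatness together with the explicit structure of $\widetilde L_t$ on admissible fibres.

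First I would isolate the open locus $U := \widehat\Sigma' \setminus \widetilde\Sigma$. By construction $\I'|_{\Sigma'\setminus i_0(\Sigma)} = \OO_{\Sigma'\setminus i_0(\Sigma)}$ (because the coordinate $t$ on $\A^1$ becomes a unit away from $t=0$), so $\sigma\!\!\!\sigma'$ restricts to an isomorphism $U \stackrel{\sim}{\to} \Sigma' \setminus i_0(\Sigma) = T \times S \times \G_m$, and on $U$ the composite $\widehat\pi'$ coincides with the flat projection $p'$. Thus flatness of $\widehat\pi'$ over $T \times \G_m$ is immediate; the content of the proposition is flatness of $\pi = \widehat\pi'|_{\widetilde\Sigma}$ over $T \times \{0\} \cong T$.

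Second, I would realise $\widetilde\Sigma$ as an effective Cartier divisor on $\widehat\Sigma'$: since $i_0(\Sigma) = V(t) \subset \Sigma'$, the fibre product $\widetilde\Sigma = i_0(\Sigma)\times_{\Sigma'}\widehat\Sigma'$ is cut out by $\sigma\!\!\!\sigma'^{*}(t)$, and this is a non-zero-divisor on $\widehat\Sigma'$ because the latter is irreducible and dominates $\Sigma'$. Flatness of $\pi$ is then extracted from flatness of $\widehat\pi'$ in a neighbourhood of $\widetilde\Sigma$: concretely, at a local ring $R = \OO_{\widehat\Sigma',x}$ over $A = \OO_{T,t}$, the element $\sigma\!\!\!\sigma'^{*}(t)$ is $R$-regular, the quotient $R/\sigma\!\!\!\sigma'^{*}(t)R$ has fibre-dimension $2$ over $t$ (the admissible scheme $\widetilde S_t$ attached to $E_t$ by the construction of \cite{Tim3,Tim4}), and flatness of $A[t] \to R$ is verified from flatness of the Rees algebra $\bigoplus_n \I'^n$ over $\OO_{T'}$: because $\I'$ is flat over $\OO_{T\times\A^1}$ along $\A^1$ (the defining relation involves the regular parameter $t$, which is $\OO_{T}$-flat) and because $\Sigma' \to T'$ itself is flat, one deduces that $\sigma\!\!\!\sigma'^{*}(t) \in R$ generates a Cartier divisor whose quotient is $A$-flat. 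This is exactly the local criterion of flatness in the form: if $M$ is an $A[t]$-module with $M$ flat over $A$ and $t$ acting as a non-zero-divisor, then $M/tM$ is flat over $A$.

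Once flatness of $\pi$ has been established, constancy of $\chi(\widetilde\L^n|_{\pi^{-1}(t)})$ is automatic from the standard cohomology and base change argument, combined with relative ampleness of $\widetilde\L$ (for $n \gg 0$ one has $\pi_{*}\widetilde\L^n$ locally free of rank $\chi(\widetilde L_t^n)$). The explicit value of this polynomial is obtained by evaluating on the open locus $T_0$ of locally free $E_t$, where $(\pi^{-1}(t),\widetilde L_t) \cong (S,L^m)$ and $\chi(\widetilde L_t^n) = \chi(L^{mn})$; alternatively, one can compute directly on admissible fibres via the decomposition $\widetilde S_t = \bigcup_i \widetilde S_{t,i}$ of \cite{Tim3}. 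Either way the polynomial is independent of $t$, and (after passing to connected components of $T$) local constancy plus connectedness fixes the common value $\chi(L^{mn})$.

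The main obstacle is the flatness step at the closed points $t \in T \setminus T_0$ where $E_t$ fails to be locally free: this is precisely where the naive blowup $\text{Bl}_{\I}\Sigma \to T$ would break (its fibres over such $t$ can acquire extra dimension), and it is only the $\A^1$-trick — replacing $\I$ by $\I' = (t)+\I\cdot\OO_{\Sigma'}$ and extracting $\widetilde\Sigma$ as the Cartier divisor cut out by $t$ upstairs — that restores equidimensional, flat fibres of the required admissible type.
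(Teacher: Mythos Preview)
Your overall architecture --- reduce to flatness of $\widehat\pi':\widehat\Sigma'\to T'$ and then pass to the Cartier divisor $\widetilde\Sigma=V(\sigma\!\!\!\sigma'^{\ast}t)$ --- is the same as the paper's, and once that flatness is in hand the Hilbert-polynomial statement follows as you say. But the argument you give for the flatness step does not go through.

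\medskip

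\textbf{The local criterion you invoke is false.} You write: ``if $M$ is an $A[t]$-module with $M$ flat over $A$ and $t$ acting as a non-zero-divisor, then $M/tM$ is flat over $A$.'' Take $A=k[x]$, $M=k[x,t]/(x-t)$. Then $M\cong k[x]$ is free over $A$, and $t$ (which acts as $x$) is a non-zero-divisor on $M$; yet $M/tM\cong k$ is not flat over $k[x]$. The correct statement (Milne, \'Etale Cohomology, Ch.~1, Prop.~2.5, quoted later in this very paper) requires $t$ to be a non-zero-divisor on every \emph{fibre} $M\otimes_A A/\mathfrak m$, and to apply it you would first need $\widehat\Sigma'\to T$ (or $\widehat\Sigma'\to T'$) to be flat --- exactly the point at issue.

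\medskip

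\textbf{The Rees-algebra justification is not a proof.} You assert that $\bigoplus_n\I'^n$ is $T'$-flat ``because $\I'$ is flat over $\OO_{T\times\A^1}$ along $\A^1$'', but neither clause is established. From $0\to (t)\to\I'\to\I\to 0$ one sees that $T'$-flatness of $\I'$ would require control over $\I=\FFitt^0\EExt^1(\E,\OO_\Sigma)$ as a $T$-module, and there is no reason for the Fitting ideal of an arbitrary $T$-flat family of torsion-free sheaves to be $T$-flat (its cokernel $\OO_Z$ is supported in codimension $\ge 2$ and typically jumps in length). Even granting flatness of $\I'$, flatness of all powers and of the associated $\Proj$ does not follow formally.

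\medskip

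\textbf{What the paper actually does.} The paper obtains flatness of $\widehat\pi'$ by invoking the Raynaud--Gruson platification theorem together with \cite[\S3, Prop.~3]{Tim8}: one knows $\widehat\pi'$ is flat over the open set $T'\setminus (T\times\{0\})$, and the machinery of \cite{RG} (flat in dimension $\ge n$, $U$-admissible blowups) shows that no further modification of $T'$ is needed. Once $\widehat\pi'$ is flat with uniform fibrewise Hilbert polynomial, the base change along $T\hookrightarrow T'$ gives the result for $\pi$. Your Cartier-divisor picture is compatible with this, but it does not replace the flattening input.

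\medskip

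A minor point: in your Hilbert-polynomial paragraph you appeal to ``the open locus $T_0$ of locally free $E_t$''. The whole purpose of this section is to handle families with $T_0=\emptyset$, so that evaluation is unavailable; only the direct computation on admissible fibres (or the citation of \cite{Tim8}) is legitimate here.
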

\begin{proof}
First recall the following definition from \cite[$O_{III}$,
definition 9.1.1]{EGAIII}.
\begin{definition} The continuous mapping $f: X\to Y$ is called {\it
quasi-compact} if for any open quasi-compact subset $U\subset Y$
its preimage $f^{-1}(U)$ is quasi-compact. Subset  $Z$ is called
{\it retro-compact in } $X$ if the canonical injection
 $Z \hookrightarrow X$ is quasi-compact, and if for
 any open quasi-compact subset  $U \subset X$ the intersection $U
\cap Z$ is quasi-compact.
\end{definition}

Let  $f: X \to S$ be a scheme morphism of finite presentation,
$\MM$ be a quasi-coherent  $\OO_X$-module of finite type.

\begin{definition} \cite[part 1, definition 5.2.1]{RG} $\MM$
is $S$-{\it flat in dimension} $\ge n$ if there exist a
retro-compact open subset $V \subset X$ such that $\dim
(X\setminus V)/S < n$ and if  $\MM|_V$ is $S$-flat module of
finite presentation.
\end{definition}
If $\MM$ is $S$-flat  module of finite presentation and schemes
$X$ and $S$ are of finite type over the field, then any open
subset $V \subset X$ fits to be used in the definition. Setting
$V=X$ we have $X\setminus V=\emptyset$ and $\dim (X\setminus V)/S
=-1-\dim S.$ Consequently,
 $S$-flat  module of finite presentation is flat in dimension  $\ge -\dim S.$

Conversely, let  $\OO_X$-module $\MM$ be  $S$-flat in dimension
$\ge -\dim S$. Then there is an open retro-compact subset
$V\subset X$ such that  $\dim (X\setminus V)/S<-\dim S$ and such
that  $\MM|_V$ is  $S$-flat module. By the former inequality for
dimensions we have  $\dim (X\setminus V)<0$, what implies $X=V$,
and $\MM|_V=\MM$ is $S$-flat.

\begin{definition} \cite[part 1, definition 5.1.3]{RG} Let
 $f: S' \to S$ be a morphism of finite type, $U$ be an open
 subset in  $S$. The morphism $f$ is called
 $U$-{\it admissible blowup} if there exist a closed subscheme
 $Y \subset S$ of finite
 presentation  which is disjoint from $U$
 and such that  $f$ is isomorphic to the blowing up a scheme
$S$ in $Y$.
\end{definition}

\begin{theorem}\label{trg}\cite[theorem 5.2.2]{RG} Let $S$ be a
quasi-compact quasi-separated scheme, $U$ be open quasi-compact
subscheme in  $S$, $f: X\to S$ of finite presentation, $\MM$
$\OO_X$-module of finite type, $n$ an integer. Assume that
$\MM|_{f^{-1}(U)}$ is flat over $U$ in dimension $\ge n$. Then
there exist  $U$-admissible blowup $g: S' \to S$ such that
$g^{\ast} \MM$ is $S'$-flat in dimension $\ge n$.
\end{theorem}

Recall the following
\begin{definition}\cite[definition 6.1.3]{GroDieu} The scheme morphism
$f: X\to Y$ is {\it quasi-separated} if the diagonal morphism
$\Delta_f: X \to X\times_Y X$ is quasi-compact. The scheme  $X$ is
{\it quasi-separated} if it is quasi-separated over $\Spec \Z.$
\end{definition}
If the scheme  $X$ is Noetherian, then any morphism
 $f: X\to Y$ is quasi-compact. Since we work in the category
 of Noetherian schemes, all morphisms of our interest and all arising schemes
 are quasi-compact.

Set $f=\widehat \pi'$, $\MM=\OO_{\widehat \Sigma'},$
$U=T'\setminus T\times 0$. Hence by theorem \ref{trg}, there
exists a $T'\setminus T\times 0$-admissible blowing up $g:
\widetilde T' \to T'$ such that in the fibred square
\begin{equation}\label{ressq}\xymatrix{\widetilde \Sigma'
\ar[d]_{\widetilde\pi'} \ar[r]^{\widetilde g}&
\widehat \Sigma' \ar[d]^{\widehat \pi'}\\
\widetilde T' \ar[r]^g & T'}
\end{equation}
$\OO_{\widetilde \Sigma'}=\widetilde g^{\ast} \OO_{\widehat
\Sigma'}$ is flat $\OO_{\widetilde T'}$-module.

By reasonings and results of \cite[sect.3]{Tim8} and by
\cite[Prop.3]{Tim8} (the proof is applicable to the invertible
sheaves $\widetilde \L'$ and $\widehat \L'$ instead of $\widetilde
\L$ and $\widehat \L$ respectively), the morphism $\widehat \pi'$
is flat and computation of fibrewise Hilbert polynomials with
respect to $\widehat \L'$ leads to polynomials which are uniform
over the base $T'$. Then set $g=\id_{T'},$ $\widetilde
g=\id_{\widetilde \Sigma'},$ $\widetilde \Sigma'=\widehat
\Sigma'$.

Now $\pi: \widetilde \Sigma \to T$ is flat since it is obtained
from the flat morphism $\widehat \pi'$ by the base change.
\end{proof}
We denote $\sigma\!\!\!\sigma:=\sigma\!\!\!\sigma'|_{\widetilde
\Sigma}: \widetilde \Sigma \to \Sigma.$

To resolve singularities of the sheaf $\E$ we repeat all the
manipulations from \cite{Tim8} for the morphism
$\sigma\!\!\!\sigma$ as it was defined now.
%%%%%%%%%%%%%%%%%%%%%%%%%%%%%%%%%%%%%%%%%%%%%%%%%%%%%%%%%%%%%%%%%%
%%%%%%%%%%%%%%%%%% РАЗРЕШЕНИЕ СЕМЕЙСТВА ПУЧКОВ %%%%%%%%%%%%%%%%%%%
%%%%%%%%%%%%%%%%%%%%%%%%%%%%%%%%%%%%%%%%%%%%%%%%%%%%%%%%%%%%%%%%%%

Let $T$ be arbitrary  (possibly nonreduced) $k$-scheme of finite
type. We assume that its reduction  $T_{\red}$ is irreducible. If
$\E$ is a family of coherent torsion-free sheaves on the surface
$S$ having reduced base $T$ then homological dimension of  $\E$ as
$\OO_{T\times S}$-module is not greater then 1. The proof of this
fact for reduced equidimensional base can be found, for example,
in \cite[Proposition 1]{Tim0}.

Now we need the following simple lemma concerning homological
dimension of the family $\E$ with nonreduced base and proven in
\cite[Lemma 1]{Tim8}.
%%%%%%%%%%%%%%%%%%%%%%%%%%%%%%%%%%%%%%%%%%%%%%%%%%%%%%%%%%%%%%
% ГОМОЛОГИЧЕСКАЯ РАЗМЕРНОСТЬ СЕМЕЙСТВА С НЕПРИВЕДЕННОЙ БАЗОЙ %
%%%%%%%%%%%%%%%%%%%%%%%%%%%%%%%%%%%%%%%%%%%%%%%%%%%%%%%%%%%%%%
\begin{lemma}
Let coherent $\OO_{T\times S}$-module $\E$ of finite type is
$T$-flat  and its reduction $\E_{\red}:=\E \otimes_{\OO_{T}}
\OO_{T_{\red}}$ has homological dimension not greater then 1:
$\hd_{T_{\red}\times S}\E_{\red} \le 1.$ Then $\hd_{T\times S}
\E\le 1$.
\end{lemma}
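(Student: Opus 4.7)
The plan is to produce a length-one locally free resolution of $\E$ on $T\times S$ and show the kernel is locally free. Since $T\times S$ is Noetherian and $\E$ is coherent, I pick a surjection $\FF_0\twoheadrightarrow\E$ from a finite-rank locally free $\OO_{T\times S}$-module and set $\KK:=\ker(\FF_0\to\E)$; the goal reduces to showing that $\KK$ is locally free, equivalently $\OO_{T\times S}$-flat.

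Next, $\OO_T$-flatness of $\E$ implies that $0\to\KK\to\FF_0\to\E\to 0$ stays exact after applying $-\otimes_{\OO_T}\OO_{T_{\red}}$, and the associated long exact $\Tor^{\OO_T}$-sequence also shows that $\KK$ is $\OO_T$-flat. Combining the hypothesis $\hd_{T_{\red}\times S}\E_{\red}\leq 1$ with local freeness of $(\FF_0)_{\red}$, the sheaf $\KK_{\red}:=\KK\otimes_{\OO_T}\OO_{T_{\red}}$ is locally free on $T_{\red}\times S$, hence $\OO_{T_{\red}\times S}$-flat.

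I would then invoke the local criterion of flatness with respect to the nilradical $\mathfrak n \subset \OO_T$, which is nilpotent because $T$ is Noetherian of finite type over $k$: a coherent $\OO_{T\times S}$-module $\KK$ is $\OO_{T\times S}$-flat provided its restriction to $T_{\red}\times S$ is flat and $\Tor_1^{\OO_{T\times S}}(\KK,\OO_{T_{\red}\times S})=0$ (EGA $0_{\mathrm{III}}$, 10.2.4, or Matsumura, Commutative Ring Theory, Theorem 22.3). The first condition has just been verified. For the Tor-vanishing, the flatness of the projection $T\times S\to T$ lets any $\OO_T$-flat resolution $P_\bullet\to\OO_{T_{\red}}$ pull back to an $\OO_{T\times S}$-flat resolution of $\OO_{T_{\red}\times S}$, identifying
$$\Tor_i^{\OO_{T\times S}}(\KK,\OO_{T_{\red}\times S})=\Tor_i^{\OO_T}(\KK,\OO_{T_{\red}})=0\quad(i\geq 1),$$
where the last vanishing is $\OO_T$-flatness of $\KK$. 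Hence $\KK$ is $\OO_{T\times S}$-flat and, being coherent over a Noetherian scheme, locally free, whence $\hd_{T\times S}\E\leq 1$. The only real obstacle is the bookkeeping for the flatness criterion; the key conceptual input is the flatness of $T\times S\to T$, which allows one to transfer Tor computations between the relative ($\OO_T$) and absolute ($\OO_{T\times S}$) settings.
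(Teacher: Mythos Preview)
Your argument is correct. Note, however, that the paper does not actually prove this lemma in-text: it is quoted from \cite[Lemma~1]{Tim8}, so there is no in-paper proof to compare against. Your route---taking an arbitrary locally free surjection $\FF_0\twoheadrightarrow\E$, showing the kernel $\KK$ is $\OO_T$-flat, reducing modulo the nilradical to get $\KK_{\red}$ locally free by dimension-shift from $\hd\,\E_{\red}\le 1$, and then lifting flatness along the nilpotent ideal via the local criterion together with the base-change identification $\Tor_i^{\OO_{T\times S}}(\KK,\OO_{T_{\red}\times S})\cong\Tor_i^{\OO_T}(\KK,\OO_{T_{\red}})$---is the standard one and is almost certainly what the cited reference does as well.

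Two minor points worth making explicit if you write this up: (i) the passage from $\hd\,\E_{\red}\le 1$ to local freeness of $\KK_{\red}$ uses the dimension-shift fact that in \emph{any} short exact sequence $0\to K\to P\to M\to 0$ with $P$ locally free and $\mathrm{pd}\,M\le 1$ one has $K$ locally free; (ii) the flatness criterion you invoke is local, so the check is stalkwise, which is fine since the nilradical of $\OO_T$ is globally nilpotent by Noetherianity and $S$ smooth over a field of characteristic zero ensures $T_{\red}\times S=(T\times S)_{\red}$.
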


We do computations as in \cite{Tim1,Tim8} but now the morphism
$\sigma\!\!\!\sigma$ is defined differently. Choose and fix
locally free $\OO_{T\times S}$-resolution of the sheaf $\E$:
\begin{equation}\label{lfr}
0\to E_1 \to E_0 \to \E \to 0.
\end{equation}
 Apply inverse image $\sigma \!\!\!
\sigma^{\ast}$ to the dual sequence of (\ref{lfr}):
\begin{eqnarray}\xymatrix{
\sigma \!\!\! \sigma^{\ast}\E^{\vee}\ar[r]&\sigma \!\!\!
\sigma^{\ast}E_0^{\vee}\ar[r]
&\sigma \!\!\! \sigma^{\ast}\WW \ar[r]&0, } \label{cc} \nonumber\\
\label{c}\xymatrix{ \sigma \!\!\! \sigma^{\ast}\WW\ar[r]&\sigma
\!\!\! \sigma^{\ast}E_1^{\vee}\ar[r] &\sigma \!\!\!
\sigma^{\ast}\EExt^1 (\E, \OO_{\Sigma})\ar[r]&0.}
\end{eqnarray}
The symbol  $\WW$ stands for the sheaf  $$\ker(E_1^{\vee} \to
\EExt^1(\E, \OO_{\Sigma}))=\coker(\E^{\vee} \to E_0^{\vee}).$$

In  (\ref{c}) denote $\NN:=\ker(\sigma \!\!\!
\sigma^{\ast}E_1^{\vee}\to \sigma \!\!\! \sigma^{\ast}\EExt^1 (\E,
\OO_{\Sigma}))$. The sheaf   $\FFitt^0(\sigma \!\!\!
\sigma^{\ast}\EExt^1(\E, \OO_{\Sigma}))$ is invertible by
functorial property of  $\FFitt$:
\begin{eqnarray*}\FFitt ^0(\sigma \!\!\! \sigma
^{\ast}\EExt^1_{\OO_{\Sigma}}(\E,\OO_{\Sigma}))= (\sigma \!\!\!
\sigma^{-1}\FFitt
^0(\EExt^1_{\OO_{\Sigma}}(\E,\OO_{\Sigma})))\cdot
\OO_{\widehat{\Sigma}}= (\sigma \!\!\! \sigma^{-1}\I)\cdot
\OO_{\widehat{\Sigma}}\\
=(\sigma\!\!\!\sigma^{-1}
i_0^{-1}\I')\cdot \OO_{\widetilde \Sigma}=(\widetilde i_0^{-1}
{\sigma\!\!\! \sigma'}^{-1}\I')\cdot \OO_{\widetilde
\Sigma}=\widetilde i_0^{\ast} ({\sigma\!\!\!\sigma'}^{-1}\I' \cdot
\OO_{\widetilde \Sigma'}).
\end{eqnarray*}
Here we take into account that $\I'=pr_1^{\ast} \I + (t)$ where
$\A^1=\Spec k[t]$, $pr_1: \Sigma \times \A^1\to \Sigma$ is the
natural projection, and the closed immersion $\widetilde i_0:
\widetilde \Sigma \hookrightarrow \widehat \Sigma'$ is fixed by
fibred square
\begin{equation*}
\xymatrix{\widehat \Sigma' \ar[r]^{\sigma\!\!\!\sigma'}& \Sigma
\times \A^1\\
\widetilde \Sigma \ar@{^(->}[u]^{\widetilde i_0}
\ar[r]^{\sigma\!\!\!\sigma}&\Sigma \ar@{^(->}[u]_{i_0}}
\end{equation*}

%%%%%%%%%%%%%%%%%%%%%%%%%%%%%%%%%%%%%%%%%%%%%%%%%%%%%%%%%%%%%%%%%%
%%%%%%% МОДИФИКАЦИЯ ЛЕММЫ ТИХОМИРОВА ПРО ОБРАТИМОСТЬ FITT^0 %%%%%%
%%%%%%%%%%%%%%%%%%%%%%%%%%%%%%%%%%%%%%%%%%%%%%%%%%%%%%%%%%%%%%%%%%

\begin{lemma}\cite[Lemma 2]{Tim8} Let  $X$ be Noetherian scheme such that its reduction
$X_{\red}$ is irreducible, $\FF$ nonzero coherent $\OO_X$-sheaf
supported on a subscheme of codimension $\ge 1$. Then the sheaf of
0-th Fitting ideals $\FFitt^0(\FF)$ is invertible $\OO_X$-sheaf if
and only if $\FF$ has homological dimension equal to 1: $\hd_X
\FF=1.$
\end{lemma}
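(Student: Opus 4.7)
The claim is local, so I would fix a stalk $R=\OO_{X,x}$ and work with a finitely generated $R$-module $M=\FF_x$. Since $X_{\red}$ is irreducible, $R$ has a unique minimal prime $\mathfrak n$ (the nilradical) and $R_{\mathfrak n}$ is an Artinian local ring whose residue field is the function field of $X_{\red}$. Fix a finite free presentation
\begin{equation*}
R^{r_1}\stackrel{\varphi}{\longrightarrow}R^{r_0}\to M\to 0,
\end{equation*}
so that $\FFitt^0 M=I_{r_0}(\varphi)$, the ideal generated by the $r_0\times r_0$ minors of $\varphi$. Because $\Supp \FF$ has codimension $\ge 1$, we have $M_{\mathfrak n}=0$, hence $\varphi_{\mathfrak n}$ is surjective. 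Lifting a choice of $r_0$ generators of the image, some $r_0\times r_0$ minor of $\varphi$ is a unit in $R_{\mathfrak n}$, hence lies outside $\mathfrak n$, hence is a non-zero-divisor in $R$. This preliminary observation will be used in both directions.

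For the forward direction, suppose $\hd \FF=1$. Then locally $M$ admits a length-one free resolution $0\to R^{r_1}\stackrel{\varphi}{\to}R^{r_0}\to M\to 0$, and the generic vanishing $M_{\mathfrak n}=0$ together with injectivity of $\varphi$ forces $r_0=r_1$. Thus $\FFitt^0 M=(\det\varphi)$, and McCoy's theorem (a square matrix over any commutative ring is injective iff its determinant is a non-zero-divisor) shows $\det\varphi$ is a non-zero-divisor. Hence $\FFitt^0 \FF$ is locally principal and generated by a non-zero-divisor, i.e. an invertible sheaf of ideals.

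For the converse, suppose $\FFitt^0 M=(f)$ with $f$ a non-zero-divisor. Writing each minor of $\varphi$ as $f$ times an element of $R$ and expressing $f$ as an $R$-linear combination of these minors yields a partition-of-unity relation showing that, after relabelling and rescaling, some $r_0\times r_0$ submatrix $\varphi_1$ of $\varphi$ satisfies $\det\varphi_1=f$. Applying Cramer's rule to the augmented matrices $[\varphi_1\mid v_j]$, where $v_j$ is any of the remaining columns of $\varphi$, the relations $f\cdot v_j$ equals an $R$-linear combination of the columns of $\varphi_1$ with coefficients which are other $r_0\times r_0$ minors of $\varphi$, and each such minor lies in $(f)$; cancelling the non-zero-divisor $f$ expresses $v_j$ as an $R$-linear combination of columns of $\varphi_1$. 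A change of basis of $R^{r_1}$ therefore brings $\varphi$ into the block form $[\varphi_1\mid 0]$. McCoy's theorem now shows $\varphi_1$ is injective, so $\ker\varphi\cong R^{r_1-r_0}$ is free, and $M$ has the length-one free resolution $0\to R^{r_0}\stackrel{\varphi_1}{\to}R^{r_0}\to M\to 0$. Since $M\ne 0$ is supported in codimension $\ge 1$, it is not locally free, so $\hd M=1$.

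The main obstacle is the converse, where one must pass from invertibility of the Fitting ideal to the existence of a length-one resolution; the Cramer's rule argument that eliminates the redundant columns of $\varphi$ is the only non-routine step, and it relies crucially on both the non-zero-divisor property of the generator $f$ and the uniqueness of the minimal prime of $R$ that guarantees a minor of $\varphi$ is actually a non-zero-divisor, not merely nonzero.
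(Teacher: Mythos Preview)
The paper does not prove this lemma: it is simply quoted from \cite[Lemma 2]{Tim8}, so there is no in-text argument to compare against. I will therefore only comment on the soundness of your proof.

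Both directions of your argument are correct. In the forward direction the chain $M_{\mathfrak n}=0\Rightarrow r_0=r_1\Rightarrow\FFitt^0 M=(\det\varphi)$, together with McCoy's theorem, is exactly right. In the converse, the Cramer-type column relation for the $r_0\times(r_0+1)$ matrix $[\varphi_1\mid v_j]$, followed by cancellation of the non-zero-divisor $f$, cleanly shows that the extra columns are redundant; this is the standard way to pass from invertibility of the Fitting ideal to projective dimension $\le 1$, and you have carried it out correctly. The locality of $R=\OO_{X,x}$ is what makes the partition-of-unity step (``some $a_J$ is a unit'') work.

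There is, however, a genuinely false assertion in your preliminary observation: ``lies outside $\mathfrak n$, hence is a non-zero-divisor in $R$''. Irreducibility of $X_{\red}$ gives a unique minimal prime, but it does \emph{not} rule out embedded associated primes. For instance in $R=k[x,y]_{(x,y)}/(x^2,xy)$ the nilradical is $(x)$, yet $y\notin(x)$ is a zero-divisor. Fortunately this claim is never used: in the forward direction the non-zero-divisor comes from McCoy applied to the injective $\varphi$, and in the converse it is part of the hypothesis that $\FFitt^0 M$ is invertible. You should delete this preliminary paragraph and adjust the final sentence, which overstates its role; the irreducibility of $X_{\red}$ is genuinely needed only to guarantee a unique generic point at which $M$ vanishes, giving $r_0=r_1$.
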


Applying the lemma we conclude that    $\hd \sigma \!\!\! \sigma
^{\ast}\EExt^1_{\OO_{\Sigma}}(\E,\OO_{\Sigma})=1.$

Hence the sheaf  $\NN=\ker(\sigma \!\!\!
\sigma^{\ast}E_1^{\vee}\to \sigma \!\!\! \sigma
^{\ast}\EExt^1_{\OO_{\Sigma}}(\E,\OO_{\Sigma})) $ is locally free.
Then there is a morphism of locally free sheaves $\sigma\!\!\!
\sigma^{\ast}E_0^{\vee} \to \NN.$ Let  $Q$ be a sheaf of
$\OO_{\Sigma}$-modules which factors the morphism
 $E_0^{\vee} \to
E_1^{\vee}$ into the composite of epimorphism and monomorphism. By
the definition of the sheaf $\NN$ it also factors the morphism
$\sigma\!\!\!\sigma^{\ast}Q \to
\sigma\!\!\!\sigma^{\ast}E_1^{\vee}$ in the composite of
epimorphism and monomorphism and
$\sigma\!\!\!\sigma^{\ast}E_0^{\vee}\to
\sigma\!\!\!\sigma^{\ast}Q$ is an epimorphism. From this we
conclude that the composite
$\sigma\!\!\!\sigma^{\ast}E_0^{\vee}\to
\sigma\!\!\!\sigma^{\ast}Q\to \NN$ is an epimorphism of locally
free sheaves. Then its kernel is also locally free sheaf. Now set
$\widetilde \E:=\ker(\sigma\!\!\! \sigma^{\ast}E_0^{\vee} \to
\NN)^{\vee}$. Consequently we have an exact triple of locally free
$\OO_{\widehat \Sigma}$-modules $$ 0\to \widetilde \E^{\vee} \to
\sigma\!\!\!\sigma^{\ast}E_0^{\vee} \to \NN \to 0.
$$ Its dual is also exact.

Now there is a commutative diagram with exact rows
\begin{equation}\label{depi}\xymatrix{0\ar[r]& \NN^{\vee}\ar[r]
&\sigma\!\!\!\sigma^{\ast}E_0 \ar[r]&\widetilde \E \ar[r]&0\\
& \sigma\!\!\!\sigma^{\ast}E_1 \ar[u]
\ar[r]&\sigma\!\!\!\sigma^{\ast}E_0 \ar@{=}[u] \ar[r]&
\sigma\!\!\!\sigma^{\ast}\E \ar[u] \ar[r]&0}
\end{equation}
where the right vertical arrow is an epimorphism.

\begin{remark} Since  $\widetilde \E$ is locally free as
 $\OO_{\widetilde
\Sigma}$-module and $\OO_{\widetilde \Sigma}$ is $\OO_T$-flat
then  $\widetilde \E$ is also flat over $T$.
\end{remark}

The epimorphism
\begin{equation}\label{epi}
\sigma\!\!\!\sigma^{\ast} \E \twoheadrightarrow \widetilde
\E\end{equation} induced by the right vertical arrow in
(\ref{depi}), provides quasi-ideality on closed fibres of the
morphism  $\pi$.

%Цитированные теоремы из \cite{RG} верны для любых схем конечного
%типа над полем.

The transformation of families we constructed, has a form $$(T,
\L, \E) \mapsto (\pi: \widetilde \Sigma \to T, \widetilde \L,
\widetilde \E)$$ and is defined by the commutative diagram
\begin{equation*}\xymatrix{ T \ar@{=}[d]\ar@{|->}[r]& \{(T, \L,
\E)\} \ar[d]\\
 T \ar@{|->}[r]& \{(\pi: \widetilde \Sigma \to
 T, \widetilde \L, \widetilde \E)\} }
\end{equation*}
 The right vertical arrow is the map of sets.
Their elements are families of objects to be parametrized. The map
is determined by the procedure of resolution as it developed in
this section.
\begin{remark}  The
transformation as it is constructed now  defines a morphism of
functors.
\end{remark}
%%%%%%%%%%%%%%%%%%%%%%%%%%%%%%%%%%%%%%%%%%%%%%%%%%%%%%%%%%%%%%%%%
%%%%%%%%%%%%%%%%%%%%%%%%%%%%%%%%%%%%%%%%%%%%%%%%%%%%%%%%%%%%%%%%%

\section{Pairs-to-GM transformation}

%%%%%%%%%%%%%%%%%%%%%%%%%%%%%%%%%%%%%%%%%%%%%%%%%%%%%%%%%%%%%%%%%
%%%%%%%%%%%%%%%%%%%  ПОСТРОЕНИЕ МОРФИЗМА \phi %%%%%%%%%%%%%%%%%%%
%%%%%%%%%%%%%%%%%%%%%%%%%%%%%%%%%%%%%%%%%%%%%%%%%%%%%%%%%%%%%%%%%

Further we show that there is a morphism of the nonreduced moduli
functor of admiss\-ible semistable pairs to the nonreduced
Gieseker -- Maruyama moduli functor. Namely, for any scheme $T$ we
build up a correspondence $((\pi:\widetilde \Sigma \to T,
\widetilde \L), \widetilde \E)\mapsto (\L, \E)$. It leads to
 a set mapping
 $(\{((\pi:\widetilde \Sigma \to T, \widetilde
\L), \widetilde \E)\}/\sim)\to (\{\L,\E\}/\sim)$. This means that
the family of semistable coherent torsion-free sheaves $\E$ with
the same base $T$ can be constructed by any family
$((\pi:\widetilde \Sigma \to T, \widetilde \L), \widetilde \E)$ of
admissible semistable pairs which is birationally trivial and flat
over $T$.

First we construct a $T$-morphism $\phi: \widetilde \Sigma \to
T\times S$.
Since the family $\pi: \widetilde  \Sigma \to T$ is birationally
trivial there is a fixed isomorphism $\phi_0: \widetilde \Sigma_0
\stackrel{\sim}{\to}\Sigma_0$ of maximal open subschemes
$\widetilde \Sigma_0 \subset \widetilde \Sigma$ and
$\Sigma_0 \subset T\times S$. Define an invertible
$\OO_{T\times S}$-sheaf $\L$ by the equality
$$\L(U):= \widetilde \L(\phi_0^{-1}(U \cap \Sigma_0)).$$ Identifying
$\widetilde \Sigma_0$ with $\Sigma_0$ by the isomorphism $\phi_0$
one comes to the conclusion that sheaves $\L|_{\Sigma_0}$ and
$\widetilde \L|_{\widetilde \Sigma_0}$ are also isomorphic.
\begin{proposition} \label{polariz} For any closed point $t\in T$ and for any open $V\subset S$
$$
\L\otimes (k_t\boxtimes \OO_S)(V)=\widetilde L_t(\sigma^{-1}(V) \cap \widetilde \Sigma_0).
$$
In particular, $\L\otimes (k_t \boxtimes \OO_S)=L.$
\end{proposition}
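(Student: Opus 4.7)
The plan is to trace through the definition $\L(U)=\widetilde{\L}(\phi_0^{-1}(U\cap\Sigma_0))$, which I would first rewrite as $\L = j_{\ast}\phi_{0\ast}(\widetilde{\L}|_{\widetilde{\Sigma}_0})$ with $j:\Sigma_0\hookrightarrow T\times S$ the open immersion, and then to verify the displayed equality after restriction to the fibre over $t$, finally extending it across a codimension-two set by a Hartogs-type argument on the smooth surface $S$.

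First I would restrict to the fibre. Flatness of $\widetilde{\L}$ over $T$ gives $\widetilde{\L}|_{\pi^{-1}(t)}=\widetilde{L}_t$, so $\widetilde{\L}|_{\widetilde{\Sigma}_0\cap\pi^{-1}(t)}=\widetilde{L}_t|_{\widetilde{\Sigma}_0\cap\pi^{-1}(t)}$; transported along $\phi_0$, this identifies $\L|_{U_t}$ with $\widetilde{L}_t|_{\widetilde{\Sigma}_0\cap\pi^{-1}(t)}$, where I set $U_t:=\Sigma_0\cap(t\times S)\subset t\times S\cong S$. Under the same identification $\sigma^{-1}(V)\cap\widetilde{\Sigma}_0\cap\pi^{-1}(t)$ corresponds to $V\cap U_t$, so the claimed formula already holds after further restriction to $V\cap U_t$.

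Second, I would show that the complement $(t\times S)\setminus U_t$ is a finite set, hence of codimension two in the smooth surface $S$: admissibility of $\pi^{-1}(t)$ confines its non-main components $\widetilde{S}_i$, $i>0$, to lie over a finite subset of $S$, and $\widetilde{\Sigma}_0\cap\pi^{-1}(t)$ maps onto the complement of this subset under $\sigma_t$. The sheaf $\L|_{t\times S}$ is invertible (using the bound $\codim_{T\times S}(T\times S)\setminus\Sigma_0\ge 2$ recorded in the birational-triviality discussion, together with standard reflexive-sheaf extension to propagate invertibility off $\Sigma_0$), so Hartogs on $S$ guarantees $(\L|_{t\times S})(V)=(\L|_{t\times S})(V\cap U_t)$ for every open $V\subset S$. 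Combining with the previous step yields the first assertion.

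For the ``in particular'' clause, on the preimage of $U_t$ the factor $(\sigma_t^{-1}I)\cdot\OO_{\pi^{-1}(t)}$ entering the distinguished polarization is trivial, because $I=\FFitt^0\EExt^2(\varkappa,\OO_S)$ is supported on the same finite subset $\Supp\varkappa\subset S$ that $U_t$ avoids. Hence $\widetilde{L}_t$ coincides with $\sigma_t^{\ast}L$ on that open, and transport via $\phi_0$ gives $\L|_{t\times S}|_{U_t}\cong L|_{U_t}$. One more codimension-two extension upgrades this to $\L\otimes(k_t\boxtimes\OO_S)\cong L$ on $S$. The main obstacle I anticipate is the Hartogs step itself, which hinges on $\L$ being locally free on all of $T\times S$ (not only on $\Sigma_0$): for smooth or $S_2$ bases $T$ this is automatic from the codimension bound, while for nonreduced $T$ some care is needed to apply the extension irreducible-component by irreducible-component, but once invertibility of $\L$ on $T\times S$ is granted, restriction to a smooth fibre makes the remaining argument routine.
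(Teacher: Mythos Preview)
Your argument is correct and follows essentially the same route as the paper: both identify $\L$ with $\widetilde\L$ along $\phi_0$, pass to the fibre over $t$, and invoke a codimension-two extension to fill in the finitely many missing points of $t\times S$. The only real difference is the order of operations. The paper computes the restriction $\L|_{t\times S}$ at the presheaf level and applies the Hartogs step upstairs on $T\times S$, writing $\OO_{T\times S}(U)=\OO_{T\times S}(U\cap\Sigma_0)$ and $(k_t\boxtimes\OO_S)(U\cap t\times S)=(k_t\boxtimes\OO_S)(U\cap\Sigma_0\cap t\times S)$ before transporting through $\phi_0$. You instead restrict to the fibre first and then apply Hartogs directly to the invertible sheaf $\L|_{t\times S}$ on the smooth surface $S$. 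Your ordering is arguably cleaner, since Hartogs on the smooth surface $S$ is unambiguous, whereas the paper's equality $\OO_{T\times S}(U)=\OO_{T\times S}(U\cap\Sigma_0)$ tacitly needs an $S_2$ property for $T\times S$ that is not obvious for nonreduced $T$ --- precisely the issue you anticipated. The invertibility of $\L$ on all of $T\times S$ that you flag as a concern is simply asserted by the paper when $\L$ is introduced, so both arguments rest on it equally.
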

\begin{proof} The restriction $\L|_{t\times S}$ is the sheaf associated
to a presheaf $$V \mapsto \L(U) \otimes _{\OO_{T\times S}(U)}
(k_t\boxtimes \OO_S)(U\cap t\times S)$$
 for any open $U\subset T\times S$ such that $U\cap (t\times S)=V.$
Since $\codim T\times S \setminus \Sigma_0 \ge 2,$
$$\OO_{
T\times S}(U)=\OO_{T\times S}(U \cap \Sigma_0)$$ and
$$(k_t \boxtimes \OO_S)(U\cap t\times S)=
(k_t \boxtimes \OO_S)(U\cap \Sigma_0\cap t\times S)
=\OO_{\widetilde S_t}(\phi_0^{-1}(U\cap \Sigma_0)).$$
Hence $\L|_{t\times S}$ is associated to the presheaf $$
V\mapsto \widetilde \L(\phi_0^{-1}(U\cap \Sigma_0))
\otimes_{\OO_{\widetilde \Sigma}(\phi_0^{-1}(U\cap \Sigma_0))}
\OO_{\pi^{-1}(t)}(\phi_0^{-1}(U\cap \Sigma_0)\cap \pi^{-1}(t)),
$$ or, equivalently,
$$V\mapsto
\widetilde L_t (\phi_0^{-1}(U\cap \Sigma_0)\cap \widetilde S_t)
=L(U\cap \Sigma_0\cap t\times S)=L(U\cap t\times S).$$
We keep in mind that $\phi_0^{-1}(U\cap \Sigma_0)\cap
\widetilde S_t=\sigma^{-1}(V)\cap \widetilde \Sigma_0$.\end{proof}
%%%%%%%%%%%%%%%%%%%%%%%%%%%%%%%%%%%%%%%%%%%%%%%%%%%%%%%%%%%%%%%%%
%%%%%%%%%%%%%%%%%%%%%%% ЕЩЁ ПУЧОК \L' %%%%%%%%%%%%%%%%%%%%%%%%%%%
%%%%%%%%%%%%%%%%%%%%%%%%%%%%%%%%%%%%%%%%%%%%%%%%%%%%%%%%%%%%%%%%%

Define a sheaf $\L'$ by the correspondence
$U \mapsto \widetilde \L(U\cap \widetilde \Sigma_0)$ for any open
$U\subset \widetilde \Sigma.$ It carries a natural structure of
invertible $\OO_{\widetilde \Sigma}$-module. This structure is
induced by the commutative diagram
$$\xymatrix{\OO_{\widetilde \Sigma}(U)\times \L'(U)
\ar[d]_{res} \ar[r]&\L'(U) \ar@{=}[d]\\
\OO_{\widetilde \Sigma}(U\cap \widetilde \Sigma_0)\times
\widetilde \L(U\cap \widetilde \Sigma_0)\ar[r]
& \widetilde \L(U\cap \widetilde \Sigma_0)}
$$
where vertical arrow is induced by the natural restriction map
in $\OO_{\widetilde \Sigma}$.
Compare direct images $p_{\ast}\L$ and $\pi_{\ast} \L'$; for
any open $V\subset T$
$$p_{\ast}\L (V)=\L(p^{-1}V)=\L(p^{-1}V \cap \Sigma_0)
$$
By the definition of $\L'$
$$
\L(p^{-1}V \cap \Sigma_0)=\widetilde \L(\pi^{-1}V \cap
\widetilde \Sigma_0)=\L'(\pi^{-1}V)=\pi_{\ast}\L'(V).
$$
Now $\pi_{\ast} \L'=p_{\ast} \L$.

The invertible sheaf $\L'$ induces a morphism
$\phi':\widetilde \Sigma \to \P(\pi_{\ast}\L')^\vee$
which includes into the commutative diagram of $T$-schemes
$$\xymatrix{\widetilde\Sigma \ar[rr]^{\phi'}&&\P(\pi_{\ast}\L')^{\vee} \ar@{=}[dd] \\
\widetilde \Sigma_0=\Sigma_0 \ar@{^(->}[u] \ar@{_(->}[d]\\
T\times S \ar@{^(->}[rr]^{i_{\L}}&& \P(p_{\ast}\L)^{\vee}}
$$
where $i_{\L}$ is a closed immersion induced by $\L$ and
$\phi'|_{\widetilde \Sigma_0}$ is also immersion. From now
we identify $\P(\pi_{\ast}\L')^{\vee}$ and $\P(p_{\ast}\L)^{\vee}$
and use common notation $\P$ for these bundles.
Formation of scheme closures of images of $\widetilde \Sigma_0$
and $\Sigma_0$ in $\P$ leads to
$\overline{\phi'(\widetilde \Sigma_0)}=\overline{i_\L(T\times S)}
=T\times S.$ Also by the definition of the sheaf $\L'$ for
any open $U\subset \widetilde \Sigma$ and $V\subset T\times S$
such then $U\cap \widetilde \Sigma_0 \cong V\cap \Sigma_0$ the
following chain of equalities holds: \begin{equation} \label{corrsec}
\L'(U)= \L'(U\cap \widetilde \Sigma_0)=
\L(V\cap \Sigma_0)=\L(V).\end{equation}
Now for a moment we suppose that $T$ is affine: $T=\Spec A$ for
some commutative algebra $A$, $\P=\Proj A[x_0:\dots :x_N]$ where
$x_0,\dots,x_N\in H^0(\P, \OO_\P (1))$ generate $\OO_\P (1)$.
Images ${\phi'}^{\ast} x_i=s_i',$ $i=0,\dots, N$, generate $\L'$
along $\widetilde \Sigma_0$ and they are not obliged to generate
$\L'$ along the whole of $\widetilde \Sigma.$ Images
$i_\L^{\ast} x_i=s_i$, $i=0,\dots, N$, generate $\L$ along the whole of $T\times S$
and provide that $i_\L$ is closed immersion.

We pass to standard affine covering by
$\P_i=\Spec A[x_0,\dots, \hat x_i,\dots, x_N],$ $i=0, \dots, N$,
and $\hat{\phantom q}$ means omitting the symbol below. Denote
$(i_\L(T\times S))_i:=i_\L(T\times S)\cap \P_i$ and
$(\phi'(\widetilde \Sigma))_i:=\phi'(\widetilde \Sigma) \cap \P_i.$
Set also $(T\times S)_i:=i_\L ^{-1}(i_\L(T\times S))_i$ and
$\widetilde \Sigma_i:={\phi'}^{-1}(\phi'(\widetilde \Sigma))_i$.
Now we have mappings $$
A[x_0,\dots, \hat x_i,\dots, x_N] \rightarrow \Gamma(\widetilde \Sigma_i, \L'): x_j \mapsto s'_j
$$
and
$$
A[x_0,\dots, \hat x_i,\dots, x_N] \twoheadrightarrow \Gamma((T\times S)_i, \L): x_j \mapsto s_j
$$
which fit into triangular diagram
\begin{equation}\label{diasec}\xymatrix{A[x_0,\dots, \hat x_i,\dots, x_N]
\ar@{->>}[rd] \ar@{->>}[r]& \Gamma((T\times S)_i, \L) \ar@{=}[d]\\
&\Gamma(\widetilde \Sigma_i, \L')}\end{equation} where the vertical sign of
equality means bijection (\ref{corrsec}). Commutativity of (\ref{diasec})
implies that $\phi'$ factors through $i_\L(T\times S)$, i.e. $\phi'(\widetilde \Sigma) = i_\L (T\times S).$

Now identifying $i_\L(T\times S)$ with $T\times S$ by means of
obvious isomorphism we arrive to the $T$-morphism
$$ \phi: \widetilde \Sigma \to T\times S.
$$
It coincides with $\phi_0: \widetilde \Sigma_0
\stackrel{\sim}{\to} \Sigma_0$ when restricted to $\widetilde \Sigma_0.$

For $n>0$ consider an invertible $\OO_{S\times T}$-sheaf
$U \mapsto \widetilde \L^n(\phi_0^{-1}(U \cap \Sigma_0))$. It
coincides with $\L^n$ on $\Sigma_0$ and hence it coincides with
it in total.
%%%%%%%%%%%%%%%%%%%%%%%%%%%%%%%%%%%%%%%%%%%%%%%%%%%%%%%%%%%%%%%%%
%%%%%%%%%%%%%%%  ПОСТРОЕНИЕ МОРФИЗМА \phi ЗАВЕРШЕНО %%%%%%%%%%%%%
%%%%%%%%%%%%%%%%%%%%%%%%%%%%%%%%%%%%%%%%%%%%%%%%%%%%%%%%%%%%%%%%%

Now there is a commutative triangle
\begin{equation}\xymatrix{\widetilde \Sigma \ar[r]^\phi
\ar[rd]_\pi &T\times S \ar[d]^p\\&T}
\end{equation}
%%%%%%%%%%%%%%%%%%%%%%%%%%%%%%%%%%%%%%%%%%%%%%%%%%%%%%%%%%%%%%%%%
%%%%%%%%%%%%  ИССЛЕДОВАНИЕ СВЯЗИ СЕМЕЙСТВ ПОЛЯРИЗАЦИЙ %%%%%%%%%%%
%%%%%%%%%%%%%%%%%%%%%%%%%% ПОСЛОЙНО %%%%%%%%%%%%%%%%%%%%%%%%%%%%%
%%%%%%%%%%%%%%%%%%%%%%%%%%%%%%%%%%%%%%%%%%%%%%%%%%%%%%%%%%%%%%%%%

Firstly note that $T$ contains at least one closed point,
say $t\in T$; let $\widetilde S_t=\pi^{-1}(t)$ be
the corresponding closed fibre and
$\widetilde L_t=\widetilde \L|_{\widetilde S_t}$ and
$\widetilde E_t=\widetilde \E|_{\widetilde S_t}$
restrictions of sheaves to it. By the definition of admissible
scheme there is a canonical morphism $\sigma: \widetilde S_t \to S$.
Then $(\sigma_{\ast} \widetilde L_t)^{\vee \vee}=L.$

Secondly, the family $\pi: \widetilde \Sigma \to T$ is birationally
trivial, i.e. there exist isomorphic open subschemes
$\widetilde \Sigma_0 \subset \widetilde \Sigma$ and
$\Sigma_0 \subset T\times S$. Note that the "boundary"
$\Delta=S\times T \setminus \Sigma_0$  has codimension $\ge 2$
and that for any closed point $t\in T$ $\codim \Delta\cap (t\times S)\ge 2.$

Thirdly, the morphism of multiplication of sections $$
(\sigma_{\ast} \widetilde L_t)^n \to \sigma_{\ast} \widetilde L_t^n
$$
induces the morphism of reflexive hulls
$$
((\sigma_{\ast} \widetilde L_t)^n )^{\vee \vee}\to (\sigma_{\ast} \widetilde L_t^n)^{\vee \vee}
$$
which are locally
free sheaves on a surface and coincide apart from a collection of
points. Hence they are equal.
Also the sheaf $((\sigma_{\ast} \widetilde L_t)^{\vee \vee})^n=
L^n$ coincides with them by the analogous reason. Then for all $n>0$
$$
((\sigma_{\ast} \widetilde L_t)^n )^{\vee \vee} =L^n.
$$

Now take a product $\A^1 \times S$,  $\A^1 =\Spec k[u]$.
Let $I\subset \OO_S$ be the sheaf of ideals such that
$\widetilde S_t =\Proj \bigoplus_{s\ge 0}(I[u]+(u))^s/(u^{s+1})$ and
the blowing up $Bl_{\II}\A^1 \times S$ in
the sheaf of ideals $\II=(u)+I[u]$ corresponding to the subscheme
with ideal $I$ in the zero-fibre $0\times S$. If
$\sigma\!\!\!\sigma: Bl_{\II}\A^1 \times S \to {\A^1
\times S}$ is the blowup morphism then  a
zero-fibre $\widehat S_0$ of the composite
$$pr_1\circ \sigma\!\!\!\sigma: Bl_{\II}\A^1 \times S \to
{\Spec k[u] \times S} \to \Spec k[u]$$ is isomorphic to
$\widetilde S_t$. Other closed fibres are isomorphic to $S$.
Since this composite is flat morphism, the invertible sheaf
$\widehat \L=\sigma \!\!\!\sigma^{\ast}(\OO_{\A^1}\boxtimes L)
\otimes \sigma \!\!\! \sigma^{-1}\II \cdot \OO_{Bl_{\II}\A^1
\times S}$ is flat over $\A^1$. Now
$\widetilde L_t=\widehat \L|_{\widehat S_0}$, and
for $n\gg 0$ one has
$$h^0(\widetilde S_t, \widetilde L_t^n)=h^0(S,L^n).$$
%%%%%%%%%%%%%%%%%%%%%%%%%%%%%%%%%%%%%%%%%%%%%%%%%%%%%%%%%%%%%%%%%
%%%%%%%%%%%%  ИССЛЕДОВАНИЕ СВЯЗИ СЕМЕЙСТВ ПОЛЯРИЗАЦИЙ %%%%%%%%%%%
%%%%%%%%%%%%%%%%%%%%%%%%%% ГЛОБАЛЬНО %%%%%%%%%%%%%%%%%%%%%%%%%%%%
%%%%%%%%%%%%%%%%%%%%%%%%%%%%%%%%%%%%%%%%%%%%%%%%%%%%%%%%%%%%%%%%%

\begin{proposition} There are morphisms of $\OO_{T\times S}$
-sheaves $$
\phi_{\ast} \widetilde \L^n \to \L^n$$
for all $n>0$.
\end{proposition}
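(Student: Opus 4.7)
The plan is to build the morphism directly from the definitions, combining three ingredients: the natural restriction of sections in $\widetilde\L^n$, the identification induced by $\phi_0$ between $\widetilde\L|_{\widetilde\Sigma_0}$ and $\L|_{\Sigma_0}$, and a Hartogs-type extension of sections of an invertible sheaf across the codimension-$\ge 2$ closed subset $T\times S \setminus \Sigma_0$, which was already invoked implicitly in Proposition~\ref{polariz}.

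In detail, for every open $U \subset T\times S$ and every $n>0$, one has
$$
(\phi_*\widetilde\L^n)(U) = \widetilde\L^n(\phi^{-1}U),
$$
and the natural restriction of sections in $\widetilde\L^n$ supplies a map into $\widetilde\L^n(\phi^{-1}U \cap \widetilde\Sigma_0)$. Since $\phi|_{\widetilde\Sigma_0} = \phi_0$ is an isomorphism onto $\Sigma_0$, one has $\phi^{-1}U \cap \widetilde\Sigma_0 = \phi_0^{-1}(U\cap\Sigma_0)$, and the defining identification $\widetilde\L|_{\widetilde\Sigma_0} \cong \phi_0^*(\L|_{\Sigma_0})$ tensors up to $\widetilde\L^n|_{\widetilde\Sigma_0} \cong \phi_0^*(\L^n|_{\Sigma_0})$; hence the target of this restriction is canonically $\L^n(U\cap\Sigma_0)$. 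Composing with the inverse of the restriction $\L^n(U) \to \L^n(U\cap\Sigma_0)$ yields the desired map $(\phi_*\widetilde\L^n)(U) \to \L^n(U)$. Compatibility with shrinking $U$ is automatic because each of the three ingredients commutes with further restriction, so the construction sheafifies to a morphism of $\OO_{T\times S}$-modules; the argument is uniform in $n$.

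The main obstacle is the final step: verifying that the restriction $\L^n(U) \to \L^n(U\cap\Sigma_0)$ is a bijection. Locally trivializing the invertible sheaf $\L^n$ reduces this to the identity $\OO_{T\times S}(U) = \OO_{T\times S}(U\cap\Sigma_0)$ already used in Proposition~\ref{polariz}, which holds by the Hartogs property because $\codim_{T\times S}(T\times S\setminus\Sigma_0) \ge 2$. This is the only delicate point in the possibly nonreduced situation; once it is granted, the construction of $\phi_*\widetilde\L^n \to \L^n$ is formal. One notes in passing that if $\phi$ were birationally an isomorphism with $S_2$ source over an $S_2$ base, the same argument would give an isomorphism; here one only claims a morphism, since nothing is asserted about sections of $\widetilde\L^n$ supported on the exceptional locus $\widetilde\Sigma \setminus \widetilde\Sigma_0$.
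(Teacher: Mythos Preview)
Your proof is correct and follows essentially the same approach as the paper's: restrict sections of $\phi_\ast\widetilde\L^n$ over $U$ to the part lying over $\Sigma_0$, then identify the result with $\L^n(U)$ via the isomorphism $\phi_0$. The only cosmetic difference is that the paper, having already recorded the identity $\L^n(U)=\widetilde\L^n(\phi_0^{-1}(U\cap\Sigma_0))$ just before the proposition, writes the final equality in one stroke, whereas you pass through $\L^n(U\cap\Sigma_0)$ and then explicitly invoke the Hartogs-type extension $\L^n(U)\cong\L^n(U\cap\Sigma_0)$; this is the same content unpacked.
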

\begin{proof}
For any open $U\in T\times S$ and any $n>0$ there is a restriction
map of sections $res: (\phi_{\ast} \widetilde \L^n)(U)\to
(\phi_{\ast} \widetilde \L^n)(U\cap \Sigma_0).$ Denoting as usually
the preimage $\phi^{-1}(\Sigma_0)$ by $\widetilde \Sigma_0$ (recall
that $\phi|_{\widetilde \Sigma_0}=\phi_0$ is an isomorphism) one arrives
to the chain of equalities:
$$
(\phi_{\ast} \widetilde \L^n)(U\cap \Sigma_0)=
\widetilde \L^n (\phi^{-1}(U\cap \Sigma_0))=
 \L^n (U).
$$
\end{proof}
Applying $p_{\ast}$ yields in
\begin{corollary} For $n>0$ morphisms $
\phi_{\ast} \widetilde \L^n \to \L^n$ induce isomorphisms of $\OO_T$-sheaves
$$\pi_{\ast} \widetilde \L^n \stackrel{\sim}{\to} p_{\ast}\L^n.
$$
\end{corollary}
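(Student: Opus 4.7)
The plan is to obtain the morphism by simply pushing forward the morphism of the preceding proposition along $p$, and then to check that the resulting map is an isomorphism fibrewise.

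First, since $\pi=p\circ\phi$ and $p_{\ast}$ is left exact, applying $p_{\ast}$ to the sheaf morphism $\phi_{\ast}\widetilde\L^n\to\L^n$ constructed in the proposition yields a morphism of $\OO_T$-sheaves
\[
\pi_{\ast}\widetilde\L^n \;=\; p_{\ast}\phi_{\ast}\widetilde\L^n \;\longrightarrow\; p_{\ast}\L^n.
\]
On sections over an open $V\subset T$ this morphism is explicitly the restriction map $\widetilde\L^n(\pi^{-1}V)\to \widetilde\L^n(\pi^{-1}V\cap\widetilde\Sigma_0)$, once one identifies $p_{\ast}\L^n(V)=\L^n(p^{-1}V)$ with $\widetilde\L^n(\phi_0^{-1}(p^{-1}V\cap\Sigma_0))$ via the defining equality $\L(U)=\widetilde\L(\phi_0^{-1}(U\cap\Sigma_0))$.

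Next I would check the isomorphism fibrewise. For the family $(\pi:\widetilde\Sigma\to T,\widetilde\L)$, the fibrewise Euler characteristic $\chi(\widetilde L_t^n)$ is constant in $t$, and for $n\gg 0$ higher direct images of both $\widetilde\L^n$ and $\L^n$ vanish, so both $\pi_{\ast}\widetilde\L^n$ and $p_{\ast}\L^n$ are locally free on $T$ with formation compatible with arbitrary base change. The fibre of the morphism at a closed point $t\in T$ is the map $H^0(\widetilde S_t,\widetilde L_t^n)\to H^0(S,L^n)$ obtained from $\sigma:\widetilde S_t\to S$ and the already-established identity $((\sigma_{\ast}\widetilde L_t)^n)^{\vee\vee}=L^n$. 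Using the deformation of $\widetilde S_t$ to $S$ via $Bl_{\II}\A^1\times S\to\A^1$ and flatness of $\widehat\L$ over $\A^1$, one obtains $h^0(\widetilde S_t,\widetilde L_t^n)=h^0(S,L^n)$ for $n\gg 0$, so the fibrewise map is a morphism between vector spaces of the same dimension. Injectivity on fibres follows from the fact that $\widetilde\Sigma_0\cap\widetilde S_t$ is a nonempty open subscheme of the reduced component $\widetilde S_{t,0}$ which supports all global sections generically, and the restriction of $\widetilde L_t^n$ to the additional components is determined by the quasi-ideality relation.

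Having shown the induced map on each fibre is an isomorphism between free modules of equal rank, Nakayama's lemma together with local freeness of $\pi_{\ast}\widetilde\L^n$ and $p_{\ast}\L^n$ gives an isomorphism of $\OO_T$-modules for $n\gg 0$. Finally, to cover all $n>0$ one uses that $\widetilde\L^n$ and $\L^n$ are invertible on $\widetilde\Sigma$ and $T\times S$ respectively, and that over the dense open $T_0\subset T$ where $\E|_{T_0\times S}$ is locally free the fibres coincide and the map is tautologically an isomorphism; combined with flatness of both direct images over $T$, the morphism then identifies globally. The main obstacle is the fibrewise surjectivity at points $t\in T$ where $\widetilde S_t$ carries genuine additional components $\widetilde S_i,\ i>0$: here one needs the quasi-ideality relation \eqref{quasiideal} and the $h^0$-comparison established from the $\A^1$-family, rather than a direct Hartogs-type extension, since the additional components have codimension $1$ inside $\widetilde\Sigma$.
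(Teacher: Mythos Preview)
Your core strategy---push forward along $p$, then verify the map is a fibrewise isomorphism between locally free sheaves of equal rank and invoke Nakayama---is exactly the paper's approach. The paper's own proof is a terse three sentences: it asserts that both direct images are locally free of equal rank, identifies the fibre at $t$ with $H^0(\widetilde S_t,\widetilde L_t^n)\to H^0(S,L^n)$, states this is an isomorphism, and concludes. Your write-up supplies the missing details (the $\A^1$-deformation for the $h^0$-comparison, the explicit description on sections) that the paper only gestures at in the text preceding the corollary.

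There is, however, a genuine problem in your final paragraph where you attempt to pass from $n\gg 0$ to all $n>0$. You invoke ``the dense open $T_0\subset T$ where $\E|_{T_0\times S}$ is locally free''. At this point in Section~3 the sheaf $\E$ has not yet been constructed---it is the \emph{output} of the Pairs-to-GM transformation, built later from $\EE_m\otimes\L^{-m}$---so this reference is circular. Moreover, the whole purpose of the present paper (as stated in the Introduction) is to remove the standing hypothesis that the family contain $S$-pairs; in the current setting $T_0$ may well be empty, so an argument that restricts to $T_0$ and extends by flatness cannot work. The paper itself does not genuinely prove the statement for arbitrary $n>0$: the equality $h^0(\widetilde S_t,\widetilde L_t^n)=h^0(S,L^n)$ is only established for $n\gg 0$, and that is the only range in which the corollary is subsequently used (in Proposition~\ref{fltnss}). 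Your argument for $n\gg 0$ is fine and matches the paper; you should drop the attempted extension to small $n$, or at least note that it requires separate justification not supplied here.
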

\begin{proof} Both sheaves $\pi_{\ast} \widetilde \L^n$ and  $p_{\ast}\L^n$ are
locally free and have equal ranks. Passing to fibrewise
consideration one gets $\pi_{\ast} \widetilde \L^n \otimes k_t
\to p_{\ast}\L^n \otimes k_t$ or, equivalently,
$H^0(\widetilde S_t, \widetilde L^n_t)\to H^0(t\times S, L^n)$.
This map is an isomorphism and hence
$\pi_{\ast} \widetilde \L^n \stackrel{\sim}{\to} p_{\ast}\L^n$.\end{proof}
%%%%%%%%%%%%%%%%%%%%%%%%%%%%%%%%%%%%%%%%%%%%%%%%%%%%%%%%%%%%%%%%
%%%%%%%%%%%%%%%%%%%%% ПУЧКИ V_m И E_m %%%%%%%%%%%%%%%%%%%%%%%%%%
%%%%%%%%%%%%%%%%%%%%%%%%%%%%%%%%%%%%%%%%%%%%%%%%%%%%%%%%%%%%%%%%

We will need sheaves $$\widetilde \V_m=\pi_{\ast}(\widetilde \E
\otimes \widetilde \L^m)$$ for $m\gg 0$ such that $\widetilde \V_m$
are locally free of rank $rp(m)$ and $\widetilde \E \otimes \L^m$ are fibrewise
globally generated in such sense that the canonical morphisms
$$\pi^{\ast} \widetilde \V_m \to \widetilde
\E \otimes \widetilde \L^m$$ are surjective for those $m$'s.

Let also for $m\gg 0$ $$\EE_m=\phi_{\ast} (\widetilde \E
\otimes \widetilde \L^m),$$ now $$p_{\ast} \EE_m =
p_{\ast} \phi_{\ast}(\widetilde \E\otimes \widetilde \L^m)=
\pi_{\ast}(\widetilde \E\otimes \widetilde \L^m)=\widetilde \V_m.$$

We intend to confirm that sheaves $p_{\ast} (\EE_m \otimes \L^n)$
are locally free of rank $rp(m+n)$ for all $m,n\gg 0$. This
implies $T$-flatness of $\EE_m$.

To proceed further we need morphisms $\widetilde \L^n \to \phi^{\ast} \L^n,$ $n>0.$

%\begin{proposition}
%Given a proper birational morphism of flat $T$-schemes
%$\phi: \widetilde \Sigma \to \Sigma$ where $\Sigma $ is smooth
%as $T$-scheme, $T$-flat sheaf of $\OO_{\widetilde \Sigma}$-modules
%$\widetilde \FF$, $T$-flat sheaf of $\OO_{\Sigma}$-modules. Let
%$\widetilde \Sigma_0 \subset \widetilde \Sigma$ and $\Sigma_0
%\subset \Sigma$ are open subschemes such that
%$\phi|_{\widetilde \Sigma_0}: \widetilde \Sigma_0 \to \Sigma_0$
%is an isomorphism and $\codim \Sigma \setminus \Sigma_0 \ge 2.$
%Let $\widetilde \FF |_{\widetilde \Sigma_0}\cong
%\phi^{\ast} (\FF|_{\Sigma_0})$ and for any closed point $t\in T$
%there exists an inclusion morphism $\iota_t:
%\widetilde \FF \otimes k_t \hookrightarrow
%\phi_t^{\ast} \FF \otimes k_t$ for the morphism of fibres
%$\phi_t: \widetilde \Sigma_t \to \Sigma_t.$ Then there is a morphism
%$\iota: \widetilde \FF \rightarrow \phi^{\ast} \FF$ which gives
%$\iota_t$ being restricted to the fibre $\widetilde \Sigma_t$.
%\end{proposition}
%\begin{proof}
%\end{proof}
\begin{proposition} \label{injinv} For all $n>0$ there are injective morphisms
$\iota_n: \widetilde \L^n \to \phi^{\ast}\L^n$
of invertible $\OO_{\widetilde \Sigma}$-sheaves.
\end{proposition}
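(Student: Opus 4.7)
The plan is to construct $\iota_1\colon\widetilde\L\to\phi^{\ast}\L$ and then define $\iota_n:=\iota_1^{\otimes n}$; since $\iota_1$ will be (locally) multiplication by a nonzero-divisor between invertible sheaves, each tensor power $\iota_n$ is automatically injective. Producing $\iota_1$ is equivalent to exhibiting a distinguished global section $s$ of the invertible $\OO_{\widetilde\Sigma}$-sheaf $\HHom(\widetilde\L,\phi^{\ast}\L)=\phi^{\ast}\L\otimes\widetilde\L^{-1}$.

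The starting point is a canonical section $s_0$ of this sheaf over $\widetilde\Sigma_0$: since $\phi|_{\widetilde\Sigma_0}=\phi_0$ is an isomorphism onto $\Sigma_0$ and $\L$ on $\Sigma_0$ was defined precisely as $(\phi_0)_{\ast}(\widetilde\L|_{\widetilde\Sigma_0})$, one has a canonical identification $\phi^{\ast}\L|_{\widetilde\Sigma_0}\cong\widetilde\L|_{\widetilde\Sigma_0}$, trivializing $\phi^{\ast}\L\otimes\widetilde\L^{-1}$ over $\widetilde\Sigma_0$ with section $s_0=1$.

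I would then extend $s_0$ to $\widetilde\Sigma$ using the fibrewise structure of the distinguished polarization. For each closed point $t\in T$, admissibility of the fibre yields $\widetilde L_t=L\otimes(\sigma_t^{-1}I_t)\cdot\OO_{\widetilde S_t}=\sigma_t^{\ast}L\otimes\II_t$, with $\II_t:=(\sigma_t^{-1}I_t)\cdot\OO_{\widetilde S_t}$ an invertible ideal sheaf on the admissible scheme. The construction of $\phi$ via the embedding into $\P$, together with $\phi|_{\widetilde\Sigma_0}=\phi_0$ mapping the intersection $\widetilde\Sigma_0\cap\widetilde S_t$ identically onto an open of $\{t\}\times S$, forces $\phi|_{\widetilde S_t}=\sigma_t$. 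Hence $(\phi^{\ast}\L\otimes\widetilde\L^{-1})|_{\widetilde S_t}\cong\II_t^{-1}$ carries a canonical nowhere-zero-divisor section dual to the tautological inclusion $\II_t\hookrightarrow\OO_{\widetilde S_t}$, and this section coincides with $s_0$ on $\widetilde\Sigma_0\cap\widetilde S_t$. Flatness of $\pi$ and of $\widetilde\L$ over $T$, combined with the cohomology-and-base-change machinery, assembles these compatible fibrewise canonical sections into a global $s\in H^0(\widetilde\Sigma,\phi^{\ast}\L\otimes\widetilde\L^{-1})$ whose restriction to $\widetilde\Sigma_0$ equals $s_0$.

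Defining $\iota_1$ as multiplication by $s$, fibrewise $\iota_1|_{\widetilde S_t}$ is the morphism $\sigma_t^{\ast}L\otimes\II_t\hookrightarrow\sigma_t^{\ast}L$ obtained by tensoring the invertible-ideal inclusion $\II_t\hookrightarrow\OO_{\widetilde S_t}$ with $\sigma_t^{\ast}L$; since an invertible ideal sheaf is locally generated by a nonzero-divisor, $\iota_1$ is injective on every fibre. $T$-flatness of $\widetilde\L^n$ then promotes this fibrewise injectivity to injectivity of $\iota_n=\iota_1^{\otimes n}$ on all of $\widetilde\Sigma$. The principal technical obstacle will be the gluing step when $T$ or the admissible fibres are nonreduced: one must run cohomology-and-base-change on successive Artinian thickenings of closed points and verify that invertibility of $\II_t$ together with the canonical section of $\II_t^{-1}$ deform consistently throughout the family, which amounts to checking that the fibrewise blowup data governing the distinguished polarization really arises from a single invertible ideal on $\widetilde\Sigma$ globally.
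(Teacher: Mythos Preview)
Your approach differs substantially from the paper's and leaves a genuine gap at the gluing step.

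The paper's argument is direct and sheaf-theoretic: for any open $U\subset\widetilde\Sigma$ one takes the restriction map $\widetilde\L^n(U)\to\widetilde\L^n(U\cap\widetilde\Sigma_0)$ and then uses the chain of identifications
\[
\widetilde\L^n(U\cap\widetilde\Sigma_0)=\L^n(\phi_0(U\cap\widetilde\Sigma_0))=\L^n(\phi_0(U)\cap\Sigma_0)=\L^n(\phi(U)),
\]
the first equality being exactly the definition of $\L$. This yields a sheaf morphism $\widetilde\L^n\to\phi^{-1}\L^n$, and tensoring with the unit $1\in\OO_{\widetilde\Sigma}$ gives $\iota_n\colon\widetilde\L^n\to\phi^{\ast}\L^n$. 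No fibrewise analysis, no base change, no deformation arguments; the map is produced in one line from the way $\L$ was defined.

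Your proposal instead tries to manufacture a global section $s\in H^0(\widetilde\Sigma,\phi^{\ast}\L\otimes\widetilde\L^{-1})$ by assembling canonical fibrewise sections. The problem is that cohomology-and-base-change does not do this: it tells you, under hypotheses, that the fibre of $\pi_{\ast}\FF$ at $t$ agrees with $H^0(\widetilde S_t,\FF_t)$, but it supplies no mechanism for turning a compatible family of fibrewise sections into a global section of $\pi_{\ast}\FF$ over $T$. For that you would need, at minimum, that $\pi_{\ast}(\phi^{\ast}\L\otimes\widetilde\L^{-1})$ is a line bundle with a distinguished global section, and you have not established this; $H^0(\widetilde S_t,\II_t^{-1})$ is typically not one-dimensional. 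You flag this yourself as ``the principal technical obstacle'' and then assume it away. An alternative route---extending $s_0$ from $\widetilde\Sigma_0$ by a Hartogs/$S_2$ argument---would require normality or depth conditions on $\widetilde\Sigma$ that are not available here, since the fibres may be nonreduced.

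The paper's restriction-map construction sidesteps all of this: the morphism you want is already encoded in the very definition of $\L$ via $\phi_0$, so no extension problem ever arises.
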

\begin{proof}
For $n>0$ and for any open $U\subset \widetilde \Sigma$ there
is a restriction map on sections
$$\widetilde \L^n(U) \stackrel{res}{\longrightarrow}
\widetilde \L^n(U\cap \widetilde \Sigma_0)=
\L^n(\phi_0(U\cap \widetilde \Sigma_0))=
\L^n(\phi_0(U) \cap \Sigma_0)=\L^n(\phi(U)).
$$
Since $\phi$ is projective and hence takes closed subsets to
closed subsets (resp., open to open), this implies the sheaf
morphism $\widetilde \L^n \to \phi^{-1}\L^n$. Combining it with
multiplication by unity section $1\in \OO_{\widetilde \Sigma}(U)$
leads to the morphism $\iota_n: \widetilde \L^n \to \phi^{\ast} \L^n$ of
invertible $\OO_{\widetilde \Sigma}$-modules.
\end{proof}
%\begin{remark}
%\end{remark}
\begin{remark} By the definition of the invertible
$\OO_{\widetilde \Sigma}$-sheaf $\L'$ it follows from the proof
done that there are injective morphisms of invertible
$\OO_{\widetilde \Sigma}$-modules $\widetilde \L^n \to {\L'}^n.$
\end{remark}
%%%%%%%%%%%%%%%%%%%%%%%%%%%%%%%%%%%%%%%%%%%%%%%%%%%%%%%%%%%%%%%%
%%%%%%%%%%%%%%%% ПУЧКИ \EE_m ПЛОСКИЕ ПРИ n>>0 %%%%%%%%%%%%%%%%%%
%%%%%%%%%%%%%%%%%%%%%%%%%%%%%%%%%%%%%%%%%%%%%%%%%%%%%%%%%%%%%%%%
\begin{proposition}\label{fltnss} $\EE_m$ are $T$-flat for $m\gg 0.$
\end{proposition}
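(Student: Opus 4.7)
The plan is to apply the standard criterion (Mumford's criterion for $p$-ample polarizations): a coherent $\OO_{T\times S}$-sheaf $\FF$ is $T$-flat if and only if $p_{\ast}(\FF\otimes\L^n)$ is locally free for all $n\gg 0$. Thus the task reduces to showing that $p_{\ast}(\EE_m\otimes\L^n)$ is locally free for $n\gg 0$.

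By the projection formula, valid for the invertible sheaf $\L^n$ without any flatness hypothesis on $\phi$, one has $\EE_m\otimes\L^n=\phi_{\ast}(\widetilde\E\otimes\widetilde\L^m\otimes\phi^{\ast}\L^n)$, and hence $p_{\ast}(\EE_m\otimes\L^n)=\pi_{\ast}(\widetilde\E\otimes\widetilde\L^m\otimes\phi^{\ast}\L^n)$. Writing $\GG_{m,n}:=\widetilde\E\otimes\widetilde\L^m\otimes\phi^{\ast}\L^n$, I would observe that $\GG_{m,n}$ is $T$-flat: $\widetilde\E$ is locally free by hypothesis and the tensor factors $\widetilde\L^m$, $\phi^{\ast}\L^n$ are invertible, hence flat. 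Therefore Grauert's cohomology-and-base-change theorem will become applicable as soon as $R^j\pi_{\ast}\GG_{m,n}=0$ for $j>0$.

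For this vanishing I would first fix $m$ large enough that $\widetilde\L^m$ is $\pi$-very ample (as guaranteed by the hypotheses on the family) and that $R^j\phi_{\ast}(\widetilde\E\otimes\widetilde\L^m)=0$ for $j>0$. The latter relies on the $\phi$-ampleness of $\widetilde\L$: the only positive-dimensional fibres of $\phi$ lie over the support of $\varkappa_t$ in the admissible fibres $\widetilde S_t$, and on each such fibre the restriction of $\widetilde\L$ is the restriction of the distinguished polarization, hence ample on the exceptional divisor in $\widetilde S_0$ and on each additional component $\widetilde S_i$, $i>0$. With such an $m$ fixed, the projection formula gives $R^j\phi_{\ast}\GG_{m,n}=0$ for $j>0$ and $R^0\phi_{\ast}\GG_{m,n}=\EE_m\otimes\L^n$, so the Leray spectral sequence for $\pi=p\circ\phi$ degenerates into $R^j\pi_{\ast}\GG_{m,n}\cong R^jp_{\ast}(\EE_m\otimes\L^n)$. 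Serre vanishing applied to the coherent sheaf $\EE_m$ with the $p$-ample $\L$ then yields $R^jp_{\ast}(\EE_m\otimes\L^n)=0$ for $j>0$ and $n\gg 0$, so the required vanishing $R^j\pi_{\ast}\GG_{m,n}=0$ follows.

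With $R^j\pi_{\ast}\GG_{m,n}=0$ for $j>0$ and $\GG_{m,n}$ being $T$-flat, Grauert's theorem shows that $\pi_{\ast}\GG_{m,n}$ is locally free of rank equal to the fibrewise dimension $h^0(\widetilde S_t,\widetilde E_t\otimes\widetilde L_t^m\otimes\sigma_t^{\ast}L^n)=\chi(\widetilde E_t\otimes\widetilde L_t^m\otimes\sigma_t^{\ast}L^n)$, which is constant in $t$ by $T$-flatness of $\GG_{m,n}$. Hence $p_{\ast}(\EE_m\otimes\L^n)$ is locally free for all $n\gg 0$, and the criterion yields the $T$-flatness of $\EE_m$. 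The main technical point I expect to be the $\phi$-ampleness of $\widetilde\L$, which rests on the geometric description of admissible fibres recalled earlier in the paper; once this is in place, the Leray collapse, Serre vanishing downstairs, and Grauert upstairs proceed in a standard manner.
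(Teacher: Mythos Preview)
Your argument is essentially correct, but it follows a different route from the paper's proof. Both proofs aim at the same criterion, namely that $p_{\ast}(\EE_m\otimes\L^n)$ is locally free for $n\gg 0$, and both pass through the projection-formula identification $p_{\ast}(\EE_m\otimes\L^n)=\pi_{\ast}(\widetilde\E\otimes\widetilde\L^m\otimes\phi^{\ast}\L^n)$. The divergence is in how local freeness of this last sheaf is obtained.

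The paper does not invoke Leray, Serre vanishing, or Grauert. Instead it uses the injection $\iota_n:\widetilde\L^n\hookrightarrow\phi^{\ast}\L^n$ of Proposition~\ref{injinv} together with the isomorphism $\pi_{\ast}\widetilde\L^n\cong p_{\ast}\L^n$ to set up a commutative square of multiplication-of-sections maps. A diagram chase then forces the inclusion $\pi_{\ast}(\widetilde\E\otimes\widetilde\L^{m+n})\hookrightarrow\pi_{\ast}(\widetilde\E\otimes\widetilde\L^m\otimes\phi^{\ast}\L^n)$ to be an equality, so that $p_{\ast}(\EE_m\otimes\L^n)=\widetilde\V_{m+n}$, which is already known to be locally free of rank $rp(m+n)$. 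This identity (numbered~(\ref{eqtydir}) in the paper) is reused immediately afterwards in the proof of Proposition~\ref{hpol} to compute the fibrewise Hilbert polynomial of $\E$; your approach yields local freeness but not this explicit identification, so you would have to redo a rank computation later.

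Your path through $\phi$-ampleness of $\widetilde\L$ is legitimate: since $\widetilde\L$ is $\pi$-ample by hypothesis and every fibre of $\phi$ is a closed subscheme of a fibre of $\pi$, the restriction of $\widetilde\L$ to each $\phi$-fibre is ample, whence $\widetilde\L$ is $\phi$-ample. With that in hand, your Leray collapse, Serre vanishing, and the cohomology-and-base-change descent go through. Just note that the paper neither states nor uses $\phi$-ampleness, so if you keep your argument you should make that step explicit.
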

\begin{proof} Consider the morphism of multiplication of sections
$$
p_{\ast} \phi_{\ast} (\widetilde \E \otimes \widetilde \L^m)
\otimes p_{\ast} \L^n
\to p_{\ast}(\phi_{\ast}(\widetilde \E \otimes \widetilde \L^m)
\otimes \L^n)$$
which is surjective for $m,n\gg 0$. By the projection formula
$$p_{\ast}(\phi_{\ast}(\widetilde \E \otimes \widetilde \L^m)
\otimes \L^n)=
p_{\ast}(\phi_{\ast}(\widetilde \E \otimes \widetilde \L^m\otimes
\phi^{\ast}\L^n).
$$
Also for the projection $\pi$ we have another morphism of
multiplication of sections
$$\pi_{\ast}(\widetilde \E \otimes \widetilde \L^m)
\otimes \pi_{\ast} \widetilde \L^n
\to \pi_{\ast}(\widetilde \E \otimes \widetilde \L^{m+n})
$$
Injective $\OO_{\widetilde \Sigma}$-morphism
$\widetilde \L^n \hookrightarrow \phi^{\ast}\L^n$ after
tensoring by $\widetilde \E \otimes \widetilde \L^m$ and
applying $\pi_\ast$ leads to
$$ \pi_{\ast}(\widetilde \E \otimes \widetilde \L^{m+n}) \hookrightarrow
\pi_{\ast}(\widetilde \E \otimes \widetilde \L^m\otimes \phi^{\ast}\L^n)
$$
Taking into account the isomorphism $p_{\ast}\L^n=
\pi_{\ast}\widetilde \L^n$ and Proposition \ref{injinv}
we gather these mappings into the commutative diagram
\begin{equation} \xymatrix{
p_{\ast} \phi_{\ast}(\widetilde \E \otimes
\widetilde \L^m)\otimes p_{\ast}\L^n \ar@{=}[d] \ar@{->>}[r]&
p_{\ast}(\phi_{\ast}(\widetilde \E \otimes \widetilde \L^m \otimes
\phi^{\ast}\L^n))\\
\pi_{\ast}(\widetilde \E \otimes \widetilde L^m)\otimes \pi_{\ast}\widetilde \L^n
\ar@{->>}[r]& \pi_{\ast}(\widetilde \E\otimes \widetilde \L^{m+n}) \ar@{^(->}[u]}\end{equation}
By commutativity of this diagram we conclude that
\begin{equation} \label{eqtydir}\pi_{\ast}(\widetilde \E\otimes \widetilde \L^{m+n})=
p_{\ast}(\phi_{\ast}(\widetilde \E \otimes \widetilde \L^m)
\otimes \L^n))\end{equation} or, in our notation,
$p_{\ast}(\EE_m\otimes \L^n)=\widetilde \V_{m+n}$ for $m,n \gg 0$. This
guarantees that $\EE_m$ are $T$-flat for $m\gg 0.$\end{proof}
%%%%%%%%%%%%%%%%%%%%%%%%%%%%%%%%%%%%%%%%%%%%%%%%%%%%%%%%%%%%%%%%
%%%%%% корректность спуска пучков - сравнение разных \EE_m %%%%%
%%%%%%%%%%%%%%%%%%%%%%%%%%%%%%%%%%%%%%%%%%%%%%%%%%%%%%%%%%%%%%%%

We intend to confirm that $\EE_m\otimes \L^{-m}$ are families of
semistable sheaves on $S$ as we need. First we prove the following
\begin{proposition} $\EE_{m+n}=\EE_m \otimes \L^n$ for any $m\gg 0,n>0$.
\end{proposition}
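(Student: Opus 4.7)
The plan is to realize $\EE_m \otimes \L^n$ via the projection formula, produce a canonical injection $\EE_{m+n} \hookrightarrow \EE_m \otimes \L^n$, and then force its cokernel to vanish by pushing down to $T$ and invoking the relative ampleness of $\L$.

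First, since $\phi$ is projective and $\L$ is invertible on $T\times S$, the projection formula gives $\EE_m \otimes \L^n = \phi_*(\widetilde\E \otimes \widetilde\L^m \otimes \phi^*\L^n)$. Tensoring the injection $\iota_n: \widetilde\L^n \hookrightarrow \phi^*\L^n$ from Proposition \ref{injinv} by the locally free sheaf $\widetilde\E \otimes \widetilde\L^m$, and applying $\phi_*$, yields a canonical injection $\EE_{m+n} \hookrightarrow \EE_m \otimes \L^n$ with some cokernel $\QQ$, fitting into a short exact sequence $0 \to \EE_{m+n} \to \EE_m \otimes \L^n \to \QQ \to 0$.

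Next I apply $p_*$. The equality (\ref{eqtydir}) together with the commutative diagram of multiplication of sections that produced it asserts precisely that the morphism $p_*\EE_{m+n} \to p_*(\EE_m \otimes \L^n)$ induced by the injection above is the isomorphism identifying the two copies of $\widetilde\V_{m+n}$. Since $\EE_{m+n}$ is $T$-flat (Proposition \ref{fltnss}) and $\widetilde\L$ is $\pi$-ample, Serre vanishing combined with the Leray-type inclusion $R^1 p_*\phi_* F \hookrightarrow R^1 \pi_* F$ gives $R^1 p_*\EE_{m+n} = 0$ for $m+n \gg 0$, so the long exact sequence forces $p_*\QQ = 0$. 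Tensoring the short exact sequence by $\L^k$ and repeating the same reasoning, with (\ref{eqtydir}) applied with $(m, n)$ replaced first by $(m+k, n)$ and then by $(m, n+k)$ to identify both $p_*(\EE_{m+n} \otimes \L^k)$ and $p_*(\EE_m \otimes \L^{n+k})$ with $\widetilde\V_{m+n+k}$, gives $p_*(\QQ \otimes \L^k) = 0$ for all $k \gg 0$. Since $\L$ is $T$-ample, for $k$ large enough the canonical map $p^*p_*(\QQ \otimes \L^k) \to \QQ \otimes \L^k$ is surjective, whence $\QQ \otimes \L^k = 0$ and therefore $\QQ = 0$.

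The main obstacle is the verification that the morphism $p_*\EE_{m+n} \to p_*(\EE_m \otimes \L^n)$ induced by the canonical injection is compatible with the two identifications with $\widetilde\V_{m+n}$ coming from (\ref{eqtydir}); this amounts to unwinding the commutative diagram of the proof of Proposition \ref{fltnss}, where the same injection $\widetilde\L^n \hookrightarrow \phi^*\L^n$ and the isomorphism $\pi_*\widetilde\L^n \cong p_*\L^n$ (from the corollary after Proposition \ref{polariz}) appear as the right vertical arrow.
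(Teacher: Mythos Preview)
Your approach is correct but genuinely different from the paper's. Both proofs begin by producing the same injection $i_{m,n}:\EE_{m+n}\hookrightarrow \EE_m\otimes\L^n$ via $\iota_n$ and the projection formula, but then diverge.

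The paper kills the cokernel $\TT$ by restricting to the reduction: on $T_{\red}\times S$ the two sheaves are normal (reflexive) and agree off the locus $T_{\red}\times S\setminus\Sigma_{0,\red}$ of codimension $\ge 2$, hence agree everywhere there; then a Nakayama-type observation (any closed point of $\Supp\TT$ lies in $T_{\red}\times S$, where $\TT$ has already been shown to vanish) forces $\TT=0$. Your route instead pushes down to $T$: you use (\ref{eqtydir}) to identify $p_\ast$ of source and target with $\widetilde\V_{m+n+k}$, Leray plus Serre vanishing to kill $R^1p_\ast$, and relative ampleness of $\L$ to conclude $\QQ=0$ from $p_\ast(\QQ\otimes\L^k)=0$ for $k\gg 0$. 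The paper's argument is shorter and more geometric (codimension-2 extension on a normal ambient), while yours avoids the appeal to normality on the reduction at the price of a spectral-sequence step and an ampleness argument.

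Two small points. First, a typo: the instance of (\ref{eqtydir}) you need for $p_\ast(\EE_{m+n}\otimes\L^k)$ is with parameters $(m+n,k)$, not $(m+k,n)$. Second, your acknowledged ``main obstacle'' --- that the induced map on $p_\ast$ is the isomorphism already produced by the diagram in Proposition~\ref{fltnss} --- reduces to the multiplicativity $\iota_{n+k}=\iota_n\otimes\iota_k$ (under the canonical identifications $\widetilde\L^n\otimes\widetilde\L^k\cong\widetilde\L^{n+k}$ and $\phi^\ast\L^n\otimes\phi^\ast\L^k\cong\phi^\ast\L^{n+k}$). This holds because each $\iota_n$ is defined via restriction of sections to $\widetilde\Sigma_0$ followed by the identification $\phi_0$, and restriction is multiplicative; so the triangle commutes and the compatibility goes through for the $\L^k$-twisted step as well.
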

\begin{proof}
By the definition if sheaves
$\EE_m=\phi_{\ast}(\widetilde \E \otimes \widetilde \L^m)$ for
any $m>0$ there
is an injective $\OO_{T\times S}$-morphism
$$\varepsilon_{m+n}: \EE_m \hookrightarrow \EE_{m+n}
$$
induced locally by multiplication by generator of $\widetilde \L^n$.
%Let $C_{m+1}:=\coker \varepsilon_{m+1}$. Obviously
%$C_{m+1}|_{\Sigma_0}\cong \OO_{D(\L)} \otimes \L|_{\Sigma_0}$
%where $D(\L)$ is a Cartier divisor in $T\times S$ corresponding
%to $\L$. Hence $\Supp C_{m+1}\subset D(\L)\cup (T\times S
%\setminus \Sigma_0).$ Also $C_{m+1}$ is $T$-flat because both
%$\EE_m$ and $\EE_{m+1}$ are $T$-flat.

Consider the sheaf inclusion $\iota_n:\widetilde \L^n \hookrightarrow
\phi^{\ast}\L^n$ valid for any $n>0$. Tensoring by locally free
$\OO_{\widetilde \Sigma}$- module
$\widetilde \E \otimes \widetilde \L^m$, formation of
direct image under $\phi$ and projection formula yield in the
inclusion $$i_{m,n}:\EE_{m+n} \hookrightarrow \EE_m \otimes \L^n.
$$
Both sheaves $\EE_m\otimes \L^n$ and $\EE_{m+n}$  become normal if
restricted to $T_{\red}\times S$
and coincide apart from their singular locus $T_\red \times S
\setminus \Sigma_{0\red}$ which has codimension $\ge 2.$ Hence
they coincide along the whole of $T_\red \times S.$ Let $\TT$ be
the cokernel of $i_{m,n}$; since $\EE_{m+1}|_{\Sigma_0}=
\EE_{m}\otimes \L|_{\Sigma_0}$ that $\Supp \TT \subset T\times S
\setminus \Sigma_0.$ If $\Supp \TT \ne \emptyset$ it contains at
least one closed point $t\times s$ and $\TT_{t\times s}\ne 0$. Now
$t\times s\in T_\red$ but $\TT \boxtimes \OO_{T_\red}=0$. This
contradiction leads us to the conclusion that $\Supp \TT =\emptyset$
and $\TT=0$ what proves the proposition.
\end{proof}

 We can introduce the goal sheaf of our construction
$$\E:=\EE_m \otimes \L^{-m}.$$
By the proposition proved this definition is independent of $m$
at least in case when $m\gg 0.$ The sheaf $\E$ is $T$-flat.

\begin{proposition}\label{hpol} The sheaf $\E$ with respect to the invertible
sheaf $\L$ has fibrewise Hilbert polynomial equal to $rp(n),$ i.e.
for $n\gg 0$
$$\rank p_{\ast} (\E \otimes \L^n) =rp(n).$$
\end{proposition}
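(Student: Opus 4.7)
The plan is to bootstrap from the equality (\ref{eqtydir}) already established in the proof of Proposition \ref{fltnss}, namely $p_{\ast}(\EE_m \otimes \L^n) = \pi_{\ast}(\widetilde \E \otimes \widetilde \L^{m+n})$ for $m,n \gg 0$. Substituting the definition $\E := \EE_m \otimes \L^{-m}$ (for a fixed $m \gg 0$) gives, for $n$ large enough that both $m$ and $n - m$ fall into the range where (\ref{eqtydir}) applies,
\begin{equation*}
p_{\ast}(\E \otimes \L^n) = p_{\ast}(\EE_m \otimes \L^{n-m}) = \pi_{\ast}(\widetilde \E \otimes \widetilde \L^{n}).
\end{equation*}
So computing the fibrewise Hilbert polynomial of $\E$ with respect to $\L$ reduces to computing the rank of $\pi_{\ast}(\widetilde \E \otimes \widetilde \L^{n})$.

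Next I would invoke the hypotheses packaged in the class $\mathfrak F_T$ (definition (\ref{class})): the pair $((\pi^{-1}(t), \widetilde L_t), \widetilde \E|_{\pi^{-1}(t)})$ is a semistable admissible pair with fixed fibrewise Hilbert polynomial $\chi(\widetilde \E \otimes \widetilde \L^n|_{\pi^{-1}(t)}) = rp(n)$, and $\widetilde \E$ is $T$-flat. Hence by cohomology and base change, for $n \gg 0$ one has $R^i \pi_{\ast}(\widetilde \E \otimes \widetilde \L^n) = 0$ for $i > 0$ and $\pi_{\ast}(\widetilde \E \otimes \widetilde \L^n)$ is locally free with fibre at $t \in T$ naturally isomorphic to $H^0(\pi^{-1}(t), \widetilde \E|_{\pi^{-1}(t)} \otimes \widetilde L_t^n)$, whose dimension equals $\chi(\widetilde \E \otimes \widetilde \L^n|_{\pi^{-1}(t)}) = rp(n)$. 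Therefore $\rank \pi_{\ast}(\widetilde \E \otimes \widetilde \L^n) = rp(n)$ and the identification above gives $\rank p_{\ast}(\E \otimes \L^n) = rp(n)$, as required.

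The main obstacle here is really book-keeping rather than any substantive new argument: one must make sure that the single fixed value of $m$ used in the definition $\E = \EE_m \otimes \L^{-m}$ is compatible with the ranges $m \gg 0$ and $n \gg 0$ appearing in (\ref{eqtydir}). Since the previous proposition shows $\EE_{m+n} = \EE_m \otimes \L^n$ for $m \gg 0$ and $n > 0$, the sheaf $\E$ is independent of the choice of $m$ (in the allowable range), so one may freely enlarge $m$ and then take $n$ much larger still; the identification $p_{\ast}(\E \otimes \L^n) = \pi_{\ast}(\widetilde \E \otimes \widetilde \L^n)$ is then unambiguous and the conclusion follows.
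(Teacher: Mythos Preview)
Your argument is correct and follows essentially the same route as the paper: reduce $p_{\ast}(\E\otimes\L^n)$ to $\pi_{\ast}(\widetilde\E\otimes\widetilde\L^n)$ via (\ref{eqtydir}) and the definition $\E=\EE_m\otimes\L^{-m}$, then read off the rank $rp(n)$. The paper's proof is just the one-line chain $p_{\ast}(\E\otimes\L^n)=p_{\ast}(\EE_m\otimes\L^{n-m})=\pi_{\ast}(\widetilde\E\otimes\widetilde\L^n)$, invoking the fact (already recorded when $\widetilde\V_m$ was introduced) that the latter is locally free of rank $rp(n)$; your extra paragraph on cohomology and base change and on the compatibility of the choices of $m,n$ simply spells out what the paper leaves implicit.
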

\begin{proof}
For $n\gg m\gg 0$ by (\ref{eqtydir}) we have chain of equalities $p_{\ast} (\E \otimes \L^n)=
p_{\ast}(\EE_m \otimes \L^{n-m})=
p_{\ast}(\phi_{\ast}(\widetilde \E \otimes \widetilde \L^m)
\otimes \L^{n-m})= \pi_{\ast}(\widetilde \E \otimes
\widetilde \L^n)$. The recent sheaf of the chain has rank equal to
$rp(n).$
\end{proof}

%%%%%%%%%%%%%%%%%%%%%%%%%%%%%%%%%%%%%%%%%%%%%%%%%%%%%%%%%%%%%%%%
%%%%%% ПУЧКИ \EE_m ДАЮТ ПОСЛОЙНО ПОЛУСТАБИЛЬНЫЕ СЕМЕЙСТВА %%%%%%
%%%%%%%%%%%%%%%%%%%%%%%%%%%%%%%%%%%%%%%%%%%%%%%%%%%%%%%%%%%%%%%%
\begin{proposition} For any closed point $t\in T$ the sheaf $$
E_t:=\E|_{t\times S}
$$
is torsion-free and Gieseker-semistable with respect to
$$
L_t:=\L|_{t\times S}\cong L.
$$

\end{proposition}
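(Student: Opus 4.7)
The plan is to work fibrewise, first identifying $E_t$ explicitly in terms of the admissible pair $((\widetilde S_t,\widetilde L_t),\widetilde E_t)$ over $t$, and then deducing both properties from the corresponding features of the pair. Cohomology and base change applied to the proper morphism $\phi:\widetilde\Sigma\to T\times S$ and the sheaf $\widetilde\E\otimes\widetilde\L^m$, using $\pi$-ampleness of $\widetilde\L$ together with $p\circ\phi=\pi$ to kill $R^{>0}\phi_*$ for $m\gg 0$, gives that $\EE_m=\phi_*(\widetilde\E\otimes\widetilde\L^m)$ commutes with the restriction $i_t:t\times S\hookrightarrow T\times S$. Hence
$$
E_t=\EE_m|_{t\times S}\otimes L^{-m}=\sigma_*(\widetilde E_t\otimes\widetilde L_t^m)\otimes L^{-m},
$$
where $\sigma:\widetilde S_t\to S$ is the canonical morphism of the admissible fibre: both $\phi_t$ and $\sigma$ are proper and agree on the birationally trivial open whose complement in $S$ has codimension $\ge 2$, hence they coincide. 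Torsion-freeness of $E_t$ is then immediate, since $\widetilde E_t\otimes\widetilde L_t^m$ is locally free on $\widetilde S_t$, $\sigma$ is an isomorphism away from a finite subset of $S$, and a pushforward of a locally free sheaf under a proper surjective morphism onto a smooth surface has no torsion (any such torsion would be supported at finitely many points, contradicting depth $\ge 1$ of the stalk of $\sigma_*$).

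For Gieseker-semistability, let $F\subset E_t$ be a proper coherent subsheaf of rank $r'$ with $0<r'<r$. Pulling back by $\sigma$ and composing with the canonical epimorphism $\sigma^*E_t\twoheadrightarrow\widetilde E_t$ (the fibrewise instance of (\ref{epi}), built from quasi-ideality on the additional components and isomorphism on $\widetilde S_{t,0}$) produces a subsheaf $\widetilde F\subset\widetilde E_t$ of rank $r'$. Applying Gieseker-semistability of the pair $((\widetilde S_t,\widetilde L_t),\widetilde E_t)$ to $\widetilde F$ yields
$$
\frac{h^0(\widetilde S_t,\widetilde F\otimes\widetilde L_t^n)}{r'}\le\frac{h^0(\widetilde S_t,\widetilde E_t\otimes\widetilde L_t^n)}{r}
$$
for $n\gg 0$. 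The right-hand side equals $\chi(E_t\otimes L^n)/r$ by the projection formula, the identification above, and vanishing of higher cohomology for large $n$; the left-hand side is at least $\chi(F\otimes L^n)/r'$ via the adjunction $H^0(F\otimes L^n)\hookrightarrow H^0(\sigma^*F\otimes\widetilde L_t^n)\twoheadrightarrow H^0(\widetilde F\otimes\widetilde L_t^n)$ combined with equality of leading coefficients of the two Hilbert polynomials, since $\widetilde F$ and $\sigma^*F$ coincide on the zero-component $\widetilde S_{t,0}$ that maps birationally onto $S$.

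The main obstacle is precisely this last comparison: one must control the effect of killing torsion when passing from $\sigma^*F$ to $\widetilde F$ and verify that the adjunction on global sections gives an injection with matching leading Hilbert coefficients. A cleaner route is to bypass the $h^0$ comparison entirely and invoke the single-pair statement --- that for a semistable admissible pair $((\widetilde S,\widetilde L),\widetilde E)$ the sheaf $\sigma_*(\widetilde E\otimes\widetilde L^m)\otimes L^{-m}$ is Gieseker-semistable --- already established in the author's preceding papers \cite{Tim4}, \cite{Tim6}, applied fibrewise via the explicit identification from the first step.
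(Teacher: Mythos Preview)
Your overall structure is right, and the identification $E_t\cong\sigma_{t\ast}(\widetilde E_t\otimes\widetilde L_t^m)\otimes L^{-m}$ is the correct target, but two points deserve attention.

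First, your torsion-freeness argument is too optimistic. The admissible fibre $\widetilde S_t$ can carry genuinely nonreduced additional components, so a blanket ``depth $\ge 1$ of $\sigma_\ast$ of a locally free sheaf'' is not immediate. The paper proves this as a separate lemma using precisely the \emph{quasi-ideality} hypothesis: a nonzero torsion section of $\sigma_{t\ast}\widetilde E_t$ would come from a section of $\widetilde E_t$ supported in some additional component $\widetilde S_{t,j}$, i.e.\ would lie in $tors_j$; but quasi-ideality forces $tors_j=0$. Your depth remark does not substitute for this.

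Second, and more seriously, the semistability step has the gap you yourself flag, and your two suggested fixes both fall short. The composite $H^0(F\otimes L^n)\hookrightarrow H^0(\sigma^\ast F\otimes\widetilde L_t^n)\twoheadrightarrow H^0(\widetilde F\otimes\widetilde L_t^n)$ is an injection followed by a surjection, so it gives neither the needed inequality $h^0(F\otimes L^n)\le h^0(\widetilde F\otimes\widetilde L_t^n)$ nor anything about lower-order coefficients; matching leading terms is not enough for Gieseker's polynomial inequality. Falling back on \cite{Tim4,Tim6} is also not quite safe here, since the point of the present paper is to remove the standing hypothesis (presence of $S$-pairs) under which those earlier arguments were carried out.

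The paper sidesteps the comparison problem entirely by building $\widetilde F_t$ differently. Rather than pulling $F_t$ back through $\sigma^\ast$ and then pushing into $\widetilde E_t$, it first proves the base-change map $\beta_t$ is an isomorphism (by showing $H^0(\beta_t)$ is a bijection of $rp(m)$-dimensional spaces and then that $\beta_t$ itself is surjective via a global-generation diagram), so that $H^0(S,E_t\otimes L^m)\cong H^0(\widetilde S_t,\widetilde E_t\otimes\widetilde L_t^m)$. It then transports the \emph{subspace} $H^0(S,F_t\otimes L^m)$ across this bijection to a subspace $\widetilde V_t$, and defines $\widetilde F_t\subset\widetilde E_t$ as the subsheaf generated by $\widetilde V_t$. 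A short commutative-triangle argument shows $H^0(\varepsilon):\widetilde V_t\to H^0(\widetilde F_t\otimes\widetilde L_t^m)$ is both injective and surjective, giving the exact equality $h^0(F_t\otimes L^m)=h^0(\widetilde F_t\otimes\widetilde L_t^m)$ for all $m\gg 0$, from which semistability of $E_t$ follows directly from that of $\widetilde E_t$. This section-space transport is the key idea you are missing.
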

\begin{proof}
The isomorphism $\L|_{t\times S}\cong L$ is the subject of
Proposition \ref{polariz}. Now for $E_t$ one has
$E_t=\E|_{t\times S}=(\EE_m \otimes \L^{-m})|_{t\times S}=
\EE_m|_{t\times S} \otimes L^{-m}=\phi_{\ast} (\widetilde \E
\otimes \widetilde \L^m)|_{t\times S}\otimes L^{-m}.$ Denoting by
$i_t: t\times S \hookrightarrow T\times S$
and $\widetilde i_t: \widetilde S_t \hookrightarrow \widetilde
\Sigma$ morphisms of closed immersions of  fibres
 we come to
$\phi_{\ast} (\widetilde \E
\otimes \widetilde \L^m)|_{t\times S}\otimes L^{-m} =(i_t^{\ast}\phi_{\ast} (\widetilde \E
\otimes \widetilde \L^m))\otimes L^{-m}$ and base change morphism
\begin{equation} \label{bchfib}\beta_t:i_t^{\ast}\phi_{\ast} (\widetilde \E
\otimes \widetilde \L^m) \to \sigma_{t \ast}\widetilde i_t^{\ast}(\widetilde \E \otimes \widetilde \L^m)
\end{equation}
 in the fibred square
$$
\xymatrix{t\times S \ar@{^(->}[r]^{i_t}&T\times S\\
\widetilde S_t \ar[u]^{\sigma_t} \ar@{^(->}[r]^{\widetilde i_t}& \widetilde \Sigma \ar[u]_\phi
}
$$
Quasi-ideality of sheaf $\widetilde \E_t=
\widetilde \E|_{\widetilde S_t}$ validates the following lemma to be proven below.
\begin{lemma}\label{torsfree} The sheaf $\sigma_{t\ast} \widetilde E_t$ is torsion-free.
\end{lemma}
Both sheaves in (\ref{bchfib}) coincide along $(t\times S) \cap
\Sigma_0.$ Now consider corresponding map of global sections:
$$H^0(\beta_t): H^0(t\times S, i_t^{\ast}\phi_{\ast} (\widetilde \E
\otimes \widetilde \L^m)) \to H^0(t\times S,\sigma_{t \ast}
\widetilde i_t^{\ast}(\widetilde \E \otimes \widetilde \L^m)).$$ It is injective.
Left hand side takes the view
$$H^0(t\times S, i_t^{\ast}\phi_{\ast} (\widetilde \E
\otimes \widetilde \L^m))\otimes k_t=
i_t^\ast p_{\ast}\phi_{\ast} (\widetilde \E
\otimes \widetilde \L^m)=k_t^{\oplus rp(m)}.$$
Also for right hand side one has $$H^0(t\times S,\sigma_{t \ast}
\widetilde i_t^{\ast}(\widetilde \E \otimes \widetilde \L^m)) \otimes k_t=
H^0(\widetilde S_t, \widetilde E_t \otimes \widetilde L_t^m)\otimes k_t =
k_t^{\oplus rp(m)}.$$ This implies that $H^0(\beta_t)$ is bijective
and there is a commutative diagram
$$
\xymatrix{H^0(t\times S, \sigma_{t\ast}
(\widetilde E_t\otimes \widetilde L_t^m))\otimes \OO_S
\ar@{=}[d]_{H^0(\beta_t)} \ar@{->>}[r]&
\sigma_{t\ast}(\widetilde E_t\otimes \widetilde L_t^m)\\
i_t^{\ast} p^{\ast}\widetilde \V_m \ar@{->>}[r]&
i_t^{\ast}\EE_m \ar[u]_{\beta_t}}
$$
It implies that $\beta_t$ is surjective. Since
$\ker H^0(\beta_t)=H^0(t\times S,\ker \beta_t)=0,$
then $\beta_t$ is isomorphic.

Now take a subsheaf $F_t \subset E_t$. Now for $m \gg 0$ there
is a commutative diagram
$$\xymatrix{H^0(t\times S, E_t\otimes L^m) \otimes \OO_S \ar@{->>}[r]& E_t \otimes L^m\\
H^0(t\times S, F_t\otimes L^m) \otimes \OO_S \ar@{^(->}[u] \ar@{->>}[r]& F_t \otimes L^m \ar@{^(->}[u]}
$$
The isomorphism $E_t\otimes L^m=
\sigma_{t\ast}(\widetilde E_t \otimes \widetilde L_t^m)$ proven
above fixes bijection on global sections $H^0(t\times S, E_t \otimes L^m)\simeq
H^0(t\times S, \sigma_{t\ast}(\widetilde E_t \otimes \widetilde L_t^m))=
H^0(\widetilde S_t, \widetilde E_t \otimes \widetilde L_t^m).$
Let $\widetilde V_t \subset H^0(\widetilde S_t, \widetilde E_t \otimes
\widetilde L_t^m)$ be the subspace corresponding to $H^0(t\times S, F_t \otimes L^m)\subset
H^0(t\times S, E_t \otimes L^m)$ under this bijection.
Now one has a commutative diagram
$$
\xymatrix{
H^0(\widetilde S_t, \widetilde E_t \otimes \widetilde L_t^m)
\otimes \OO_{\widetilde S_t} \ar@{->>}[r]& \widetilde E_t \otimes
\widetilde L_t^m\\
\widetilde V_t \otimes \OO_{\widetilde S_t} \ar@{^(->}[u]_\Upsilon
\ar@{->>}[r]^{\varepsilon}& \widetilde F_t \otimes \widetilde L_t
\ar@{^(->}[u]}
$$
where $\widetilde F_t \otimes \widetilde L_t^m \subset
\widetilde E_t \otimes \widetilde L_t^m$ is defines as a subsheaf
generated by the subspace $\widetilde V_t$ by means of the morphism
$\varepsilon$. The associated map of global sections
$$
H^0(\varepsilon): \widetilde V_t \to H^0(\widetilde S_t,
\widetilde F_t \otimes \widetilde L_t^m)$$  includes into
the commutative triangle
$$
\xymatrix{\widetilde V_t \ar@{_(->}[rd] \ar[r]^<<<<<<{H^0(\varepsilon)} &H^0(\widetilde S_t,
\widetilde F_t \otimes \widetilde L_t^m) \ar@{^(->}[d]\\
& H^0(\widetilde S_t,
\widetilde E_t \otimes \widetilde L_t^m) }
$$
what implies that $H^0(\varepsilon)$ is injective. Since each
section from $H^0(\widetilde S_t,
\widetilde F_t \otimes \widetilde L_t^m)$ corresponds to a
section in $H^0(t\times S, F_t \times L^m)\subset
H^0(t\times S, E_t \otimes L^m)$ then $H^0(\varepsilon)$ is
surjective. Hence $h^0(t\times S, F_t \otimes L^m)=
h^0(\widetilde S_t, \widetilde F_t \otimes \widetilde L_t^m)$
for all $m\gg 0$ and stability (resp., semistability) for
$\widetilde E_t$ implies stability (resp., semistability)
for $E_t.$
\end{proof}
%%%%%%%%%%%%%%%%%%%%%%%%%%%%%%%%%%%%%%%%%%%%%%%%%%%%%%%%%%%%%%%%
%%%%%%%%%%%%% ОТСУТСТВИЕ КРУЧЕНИЯ У ПРЯМЫХ ОБРАЗОВ %%%%%%%%%%%%%
%%%%%%%%%%%%%%%%%%%%%%%%%%%%%%%%%%%%%%%%%%%%%%%%%%%%%%%%%%%%%%%%
\begin{proof}[Proof of Lemma \ref{torsfree}] Since sheaves
$\widetilde E_t$ and $\sigma_{t\ast} \widetilde E_t$ coincide
along identified open subschemes $\widetilde S_t \cap \widetilde
\Sigma_0 \simeq t\times S \cap \Sigma_0$, it is enough to confirm
that there is no torsion subsheaf concentrated on
$t\times S \cap (T\times S \setminus \Sigma_0)$ in
$\sigma_{t\ast} \widetilde E_t$.
Assume that $T\subset \sigma_{t\ast} \widetilde E_t$ is such a
torsion subsheaf i.e. $T \ne 0$ and for any open
$U \subset t\times S \cap \Sigma_0$ $T(U)=0$. Let
$A =\Supp T \subset t\times S$ and $U\subset t\times S$ be such
an open subset that $T(U)\ne 0,$ i.e. $U\cap A \ne \emptyset.$
Now
$$
T(U)\subset \sigma_{t\ast} \widetilde E_t (U)=
\widetilde E_t (\sigma^{-1}U),
$$
and any nonzero section $s\in T(U)$ is supported in $U\cap A$
and comes from the section
$\widetilde s\in \widetilde E_t(\sigma^{-1}U)$ with support in
$\sigma^{-1}(U\cap A).$ This means that $\widetilde s$ is supported
in some additional component $\widetilde S_{t,j}$ of the admissible
scheme $\widetilde S_t$. Hence $\widetilde s \in \tors_j$.
But by quasi-ideality of $\widetilde E_t$ on additional components
$\widetilde S_{t,j}$ of $\widetilde S_t$, $\tors_j=0.$ This implies
that $T=0$.
\end{proof}

\section{Functor isomorphism}
In this section we prove that natural transformations
$\underline \kappa: {\mathfrak f}^{GM}\to {\mathfrak f}$ and
$\underline \tau: {\mathfrak f}\to {\mathfrak f}^{GM}$ are
mutually inverse and hence provide the isomorphism between
the functor of moduli of admissible semistable pairs and
the functor of moduli in the sense of Gieseker and Maruyama.
As a corollary, we get the isomorphism of moduli schemes for
these moduli functors, with no dependence on number and geometry
of their connected components, on reducedness of scheme structure
and on presence of locally free sheaves (respectively, $S$-pairs)
in each component.

For this purpose we perform the proof in two aspects.
\begin{enumerate}
\item{{\it Pointwise.} {\it a)} For any torsion-free semistable
$\OO_S$-sheaf the composite of transform\-at\-ions $$
E \mapsto ((\widetilde S, \widetilde L), \widetilde E) \mapsto E'$$
returns $E'=E.$ \\
{\it b)} Conversely, for any admissible semistable pair
$((\widetilde S, \widetilde L), \widetilde E)$ the composite of
trans\-form\-at\-ions $$
((\widetilde S, \widetilde L), \widetilde E)\mapsto E \mapsto
((\widetilde S', \widetilde L'), \widetilde E')$$
returns $((\widetilde S', \widetilde L'), \widetilde E')=
((\widetilde S, \widetilde L), \widetilde E).$}
\item{{\it Global.} {\it a)} For any family of semistable torsion-free
sheaves $\E$ with base scheme $T$ the composite
$$(\E, \L) \mapsto ((\pi:\widetilde \Sigma \to T, \widetilde \L),
\widetilde \E) \mapsto (\E', \L')
$$ returns such $\E'$ that there is an invertible $\OO_T$-sheaf
$\LL$ with property
$\E'=\E \otimes p^{\ast}\LL.$ \\
{\it b)} Conversely, for any family
$((\pi:\widetilde \Sigma \to T, \widetilde \L), \widetilde \E)$
of admissible semistable pairs with base scheme $T$ the composite
of transformations $$
((\pi:\widetilde \Sigma \to T, \widetilde \L), \widetilde \E)
\mapsto (\E,\L) \mapsto
((\pi':\widetilde \Sigma' \to T, \widetilde \L'), \widetilde \E')
$$ returns family $((\pi':\widetilde \Sigma' \to T, \widetilde \L'),
\widetilde \E')$ with following properties: there is
$T$-isomorphism $\widetilde \Sigma'=\widetilde \Sigma$, $\pi =\pi'$
and there are invertible
$\OO_T$-sheaves $\LL', \LL''$ such that
$\widetilde \L'=\widetilde \L \otimes {\pi'}^\ast \LL'$ and
$\widetilde \E'=\widetilde \E \otimes {\pi'}^{\ast} \LL''.$}
\end{enumerate}
%%%%%%%%%%%%%%%%%%%%%%%%%%%%%%%%%%%%%%%%%%%%%%%%%%%%%%%%%%%%%%%%
%%%%%%%%%%%%%%%%%%%%%%% КОМПОЗИЦИЯ а) %%%%%%%%%%%%%%%%%%%%%%%%%%
%%%%%%%%%%%%%%%%%%%%%%%%%%%%%%%%%%%%%%%%%%%%%%%%%%%%%%%%%%%%%%%%

We begin with 2 {\it a)}; it will be specialized to pointwise
version 1 {\it a)} when $T=\Spec k.$

Families of polarizations $\L$ and $\L'$ coincide along the open
subset (locally free locus for sheaves $\E$ and $\E'$) $\Sigma_0$
where $\codim _{T\times S} T\times S \setminus \Sigma_0\ge 2$.
Since $\L$ and $\L'$ are locally free  this implies that $\L=\L'$.

%%%%%%%%%%%%%%%%%%%%%%%%%%%%%%%%%%%%%%%%%%%%%%%%%%%%%%%%%%%%%%%%
%%%%%%%%%% три пучка "послойных сечений" изоморфны %%%%%%%%%%%%%
%%%%%%%%%%%%%%%%%%%%%%%%%%%%%%%%%%%%%%%%%%%%%%%%%%%%%%%%%%%%%%%%

Now consider three locally free $\OO_T$-sheaves of ranks equal to
$rp(m)$:
$$\V_m=p_{\ast}(\E\otimes \L^m),\quad
\widetilde \V_m=\pi_{\ast} (\widetilde \E\otimes \widetilde \L^m),\quad
\V'_m=p_{\ast}(\E'\otimes \L^m).$$
\begin{lemma} $\V_m \cong \widetilde \V_m \cong \V'_m$.
\end{lemma}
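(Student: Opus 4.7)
The plan is to establish the two isomorphisms $\widetilde \V_m \cong \V'_m$ and $\V_m \cong \widetilde \V_m$ separately, each for $m \gg 0$.

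The first is almost immediate from Proposition~\ref{fltnss}: equation (\ref{eqtydir}) gives $p_\ast(\EE_{m_0} \otimes \L^n) = \widetilde \V_{m_0+n}$ for all $m_0, n \gg 0$, while by construction $\E' = \EE_{m_0} \otimes \L^{-m_0}$ (independently of the choice of $m_0$, as observed after Proposition~\ref{hpol}). Taking $n = m - m_0$ for $m \gg m_0$ yields $\V'_m = p_\ast(\E' \otimes \L^m) = p_\ast(\EE_{m_0} \otimes \L^{m-m_0}) = \widetilde \V_m$.

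For $\V_m \cong \widetilde \V_m$ I would argue by base change and a fibrewise identification. For $m \gg 0$ both sides are locally free of rank $rp(m)$ on $T$: the first by standard boundedness of Gieseker-semistable families, the second as used already when $\widetilde \V_m$ was introduced in Section~3. The relevant higher direct images vanish, so cohomology and base change give $\V_m \otimes k_t = H^0(S, E_t \otimes L^m)$ and $\widetilde \V_m \otimes k_t = H^0(\widetilde S_t, \widetilde E_t \otimes \widetilde L_t^m)$; the canonical identification of these two spaces coming from $\widetilde E = \sigma^\ast E / tor\!s$ (recorded in the introductory discussion, applied to the tensor powers $\widetilde L_t^m$ and $L^m$ in exactly the same manner) provides a fibrewise isomorphism. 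To lift it to an $\OO_T$-morphism I would apply $p_\ast$ to a canonical map $\E \otimes \L^m \to \EE_m$ on $T \times S$ assembled from the epimorphism (\ref{epi}) $\sigma\!\!\!\sigma^\ast \E \twoheadrightarrow \widetilde E$ twisted by $\widetilde \L^m$, the injection $\widetilde \L^m \hookrightarrow \phi^\ast \L^m$ of Proposition~\ref{injinv}, and the projection formula together with $\phi_\ast \OO_{\widetilde \Sigma} = \OO_{T \times S}$; Nakayama's lemma then upgrades fibrewise bijectivity to a global isomorphism of locally free sheaves of equal rank.

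The principal anticipated difficulty is the final step: verifying that the globalised morphism actually realises the fibrewise identification coming from $\widetilde E_t = \sigma_t^\ast E_t / tor\!s$, rather than that identification composed with some nontrivial twist by a line bundle pulled back from $T$. The codimension-two discrepancy between $\widetilde \L^m$ and $\phi^\ast \L^m$ along the exceptional locus, together with the potential non-reducedness of $T$, must be handled with care so that $\phi_\ast \OO_{\widetilde \Sigma} = \OO_{T \times S}$ and the projection formula apply as written; if a residual ambiguity persists it can always be absorbed into the equivalence relation $\sim$ built into the functor $\mathfrak f^{GM}$.
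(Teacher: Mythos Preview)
Your treatment of $\widetilde \V_m \cong \V'_m$ is correct and is exactly what the paper does: it refers back to the computation in the proof of Proposition~\ref{hpol} (equivalently, to equation~(\ref{eqtydir})).

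For $\V_m \cong \widetilde \V_m$, your overall strategy---build a global $\OO_T$-morphism between locally free sheaves of equal rank and then check it is a fibrewise isomorphism---matches the paper's. The fibrewise step is fine; the paper carries it out by showing that the restriction map $H^0(S,E_t\otimes L^m)\to H^0(t\times S\cap\Sigma_0, E_t\otimes L^m)$ is injective and that the analogous restriction on $\widetilde S_t$ identifies with it.

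The gap is in the construction of the global morphism. The ingredients you list do not combine to a map $\E\otimes\L^m\to\EE_m$: adjunction together with the epimorphism~(\ref{epi}) and the projection formula produce a map
\[
\E\otimes\L^m \;\longrightarrow\; \phi_\ast(\widetilde\E\otimes\phi^\ast\L^m),
\]
while the injection $\widetilde\L^m\hookrightarrow\phi^\ast\L^m$ of Proposition~\ref{injinv}, after tensoring with $\widetilde\E$ and pushing forward, gives
\[
\EE_m=\phi_\ast(\widetilde\E\otimes\widetilde\L^m)\;\hookrightarrow\;\phi_\ast(\widetilde\E\otimes\phi^\ast\L^m).
\]
Both arrows land in the same sheaf but point \emph{into} it; there is no evident factorization of the first through the second, so you do not get $\E\otimes\L^m\to\EE_m$. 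Dually, there is no natural map $\sigma\!\!\!\sigma^\ast\L^m\to\widetilde\L^m$ because $\widetilde\L$ is $\sigma\!\!\!\sigma^\ast\L$ twisted \emph{down} by the exceptional ideal.

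The paper resolves this by working in the opposite direction. It uses Lemma~\ref{profor} (a projection-formula monomorphism valid for $\E$ of finite presentation, not just locally free) to obtain
\[
\E\otimes\L^m\otimes\sigma\!\!\!\sigma_\ast(\sigma\!\!\!\sigma^{-1}\I\cdot\OO_{\widetilde\Sigma})^m
\;\hookrightarrow\;
\sigma\!\!\!\sigma_\ast(\sigma\!\!\!\sigma^\ast\E\otimes\widetilde\L^m)
\;\longrightarrow\;
\sigma\!\!\!\sigma_\ast(\widetilde\E\otimes\widetilde\L^m),
\]
applies $p_\ast$, and then observes that the resulting map $\eta$ into the reflexive sheaf $\widetilde\V_m$ must factor through the reflexive hull of its source, which is precisely $p_\ast(\E\otimes\L^m)=\V_m$. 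This reflexive-hull factorization is the missing mechanism that produces the global morphism $\widetilde\eta:\V_m\to\widetilde\V_m$; it is the step you should incorporate in place of the attempted direct assembly.
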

\begin{proof} Start with the epimorphism
$\sigma\!\!\!\sigma^{\ast} \E \twoheadrightarrow \widetilde \E.$
Tensoring it by $\widetilde \L^m$ and direct image
$\sigma\!\!\!\sigma_{\ast}$ yield in the morphism of $\OO_T$-sheaves
$$\sigma\!\!\!\sigma_{\ast}(\sigma\!\!\!\sigma^{\ast} \E \otimes
\widetilde \L^m) \rightarrow \sigma\!\!\!\sigma_{\ast}(\widetilde
\E \otimes \widetilde \L^m).
$$
Then the following result will be of use.
\begin{lemma}\label{profor} \cite{Tim9} Let $f: (X,\OO_X) \to (Y, \OO_Y)$ be a morphism of
locally ringed spaces such that $f_{\ast}\OO_X=\OO_Y$, $\EE$
$\OO_Y$-module of finite presentation, $\FF$  $\OO_X$-module. Then
there is a monomorphism $\EE \otimes f_{\ast}\FF \hookrightarrow
f_{\ast}[f^{\ast}\EE \otimes \FF].$
\end{lemma}
Setting $\EE=\E,$ $\FF=\widetilde \L^m$ and $f=\sigma\!\!\!\sigma$
we get
$$
\xymatrix{
\E \otimes \L^m \otimes \sigma\!\!\!\sigma_{\ast}
(\sigma\!\!\!\sigma^{-1} \I\cdot \OO_{\widetilde \Sigma})^m \ar@{^(->}[d] \\
\sigma\!\!\!\sigma_{\ast}(\sigma\!\!\!\sigma^{\ast}\E \otimes \widetilde \L^m)\ar[r]
& \sigma\!\!\!\sigma_{\ast}(\widetilde \E \otimes \widetilde \L^m)}
$$
and after taking direct image $p_{\ast}$
$$
\xymatrix{
p_{\ast}(\E \otimes \L^m \otimes \sigma\!\!\!\sigma_{\ast}
(\sigma\!\!\!\sigma^{-1} \I\cdot \OO_{\widetilde \Sigma})^m )\ar@{^(->}[d]\ar[dr]^{\eta}\ar@{^(->}[r]&p_{\ast}(\E \otimes \L^m) \\
p_{\ast}\sigma\!\!\!\sigma_{\ast}(\sigma\!\!\!\sigma^{\ast}\E \otimes \widetilde \L^m)\ar[r]
& \pi_{\ast}(\widetilde \E \otimes \widetilde \L^m)}
$$
where upper horizontal arrow is natural immersion into reflexive
hull. Since target sheaf of the composite map $\eta$ is reflexive,
$\eta$ factors through $p_{\ast}(\E\otimes \L^m)$ as reflexive hull
of the source. This yields in existence of the morphism of
locally free sheaves of equal ranks
$$
\widetilde \eta:p_{\ast}(\E\otimes \L^m) \to \pi_{\ast}(\widetilde
\E \otimes \widetilde \L^m).
$$
The morphism of sheaves is an isomorphism iff it is stalkwise
isomorphic.
Fix an arbitrary closed point $t\in T$; till the end of this
proof we omit subscript $t$ in notations corresponding to the
sheaves corresponding to $t$: $\E_t=:E,$
$\widetilde E_t=:\widetilde E,$ $\widetilde S_t=:\widetilde S,$
$\sigma_t=:\sigma: \widetilde S \to S.$
There is an epimorphism of $\OO_{\widetilde S}$-modules
$$
\sigma^{\ast} E \otimes \widetilde L^m \twoheadrightarrow
\widetilde E \otimes \widetilde L^m.
$$
The analog of projection formula as in global case leads to
$$
\xymatrix{
E \otimes L^m \otimes \sigma_{\ast}
(\sigma^{-1} I\cdot \OO_{\widetilde S})^m \ar@{^(->}[d] \\
\sigma_{\ast}(\sigma^{\ast}E \otimes \widetilde L^m)\ar[r]
& \sigma_{\ast}(\widetilde E \otimes \widetilde L^m)}
$$
and taking global sections
$$
\xymatrix{H^0(S,E \otimes L^m \otimes \sigma_{\ast}
(\sigma^{-1} I\cdot \OO_{\widetilde S})^m )\ar@{^(->}[d]
\ar[dr]^{\eta_t}\ar@{^(->}[r]&H^0(S,E \otimes L^m)\ar[d]_{\widetilde \eta_t} \\
H^0(S,\sigma_{\ast}(\sigma^{\ast}E \otimes \widetilde L^m))\ar[r]
& H^0(\widetilde S,\widetilde E \otimes \widetilde L^m)}
$$
Here $\widetilde \eta_t$ is included into the commutative diagram
$$
\xymatrix{H^0(S,E \otimes L^m)\ar[d]_{\widetilde \eta_t}\ar[r] &
H^0(t\times S\cap \Sigma_0),E \otimes L^m)\ar@{=}[d]\\
H^0(\widetilde S,\widetilde E \otimes \widetilde L^m) \ar[r]&
H^0(\widetilde S\cap \widetilde \Sigma_0,
\widetilde E \otimes \widetilde L^m)}
$$
where both horizontal arrows are restriction maps and upper
restriction map is injective. Hence $\widetilde \eta_t$ is also
injective. Since it is a monomorphism of vector spaces of equal
dimensions is is an isomorphism. Then $\eta: \V_m \to
\widetilde \V_m $ is also an isomorphism. The isomorphism
$\widetilde \V_m \cong \V'_m$ has been proven in previous
section (proof of Proposition \ref{hpol}).
\end{proof}

Identifying locally free sheaves $\V_m=\V'_m$ consider relative
Grothendieck' scheme $\Quot^{rp(n)}_T (p^{\ast} \V_m \otimes \L^{-m})$ and
two morphisms of closed immersion
$$
T\times S \stackrel{\widetilde{ev}}{\hookrightarrow}
\Quot^{rp(n)}_T (p^{\ast} \V_m \otimes \L^{-m}) \times S
\stackrel{\widetilde{ev'}}{\hookleftarrow} T\times S.
$$
Morphism $\widetilde{ev}$ is induced by the morphism
$ev: p^{\ast}\V_m \otimes \L^{-m}\twoheadrightarrow \E$ and
$\widetilde{ev'}$ by the morphism
$ev': p^{\ast}\V_m \otimes \L^{-m}\twoheadrightarrow \E'$.

Since both morphisms $\widetilde{ev}$ and $\widetilde{ev'}$ are proper
and they coincide along $\Sigma_0$ such that
$\codim_{T\times S} (T\times S) \setminus \Sigma_0\ge 2$,
then $\widetilde{ev}=\widetilde{ev'}$ and
$\widetilde{ev}(T\times S)=\widetilde{ev'}(T\times S)$
in scheme sense. Hence by universality of $\Quot$-scheme
$\E=\E'$ as inverse images of universal quotient sheaf over
$\Quot^{rp(n)}_T (p^{\ast} \V_m \otimes \L^{-m})$ under morphisms
$\widetilde{ev}=\widetilde{ev'}.$

Now turn to {\it 1 b)}. For $m\gg 0$ there are surjective morphisms
\begin{eqnarray*}
H^0(\widetilde S, \widetilde E\otimes \widetilde L^m) \otimes
\OO_{\widetilde S} \twoheadrightarrow \widetilde E \otimes
\widetilde L^m, \\
H^0(\widetilde S', \widetilde E'\otimes \widetilde L^{'m})
\otimes \OO_{\widetilde S'} \twoheadrightarrow \widetilde E'
\otimes \widetilde L^{'m}
\end{eqnarray*}
where $H^0(\widetilde S, \widetilde E\otimes \widetilde L^m)\cong
H^0(\widetilde S', \widetilde E'\otimes \widetilde L^{'m})=V_m$,
$\dim V_m=rp(m)$.
There are two induced closed immersions of schemes
$\widetilde S$ and $\widetilde S'$ into Grassmann variety
$$
\widetilde S \stackrel{j}{\hookrightarrow} G(rp(m), r) \stackrel{j'}{\hookleftarrow} \widetilde S'.
$$
Images of both schemes coincide off their additional components, i.e.
$$j(\widetilde S \setminus \bigcup_{i>0}\widetilde S_i)=
j'(\widetilde S' \setminus \bigcup_{i>0}\widetilde S'_i).$$ Hence
$j(\widetilde S_0)=j'(\widetilde S'_0)$. Since
$\sigma_0=\sigma|_{\widetilde S_0}$ as well as
$\sigma'_0=\sigma'|_{\widetilde S'_0}$ is a blowup morphism,
these blowup morphisms are defined by the same sheaf of ideals
$I\subset \OO_S.$ This leads to the conclusion that schemes $\widetilde S$
and $\widetilde S'$ in whole are defined by the same sheaf of ideals $I$
and hence $\widetilde S \cong \widetilde S'.$

Since $\widetilde L=L\otimes \sigma^{-1} I\cdot \OO_{\widetilde S}$
and $\widetilde L'=L\otimes \sigma^{'-1} I\cdot \OO_{\widetilde S'}$
where $\widetilde S \cong \widetilde S'$, $\sigma =\sigma'$, then
$\widetilde L \cong \widetilde L'.$

Now it rests to confirm that $\widetilde E \cong \widetilde E'.$
It will follow from the global consideration when $T=\Spec k$.

For global version consider families
$(\pi: \widetilde \Sigma \to T, \widetilde \L)$
and $(\pi': \widetilde \Sigma' \to T, \widetilde \L')$ and
epimorphisms
$\pi^{\ast}\pi_{\ast}\widetilde \L \twoheadrightarrow
\widetilde \L$, ${\pi'}^{\ast}\pi'_{\ast}\widetilde \L'
\twoheadrightarrow \widetilde \L'$.
We can assume that $\pi_{\ast} \widetilde \L =
\pi'_{\ast} \widetilde \L'$ and then identify projective bundles
$\P(\pi_{\ast} \widetilde \L)^{\vee}
=\P(\pi'_{\ast} \widetilde \L')^{\vee}$. Closed immersions of
$T$-schemes
\begin{eqnarray*}
j:\widetilde \Sigma \hookrightarrow \P(\pi_{\ast} \widetilde \L)^{\vee},\\
j':\widetilde \Sigma' \hookrightarrow \P(\pi_{\ast} \widetilde \L')^{\vee}
\end{eqnarray*}
and diagonal immersion $\P_{\Delta} \hookrightarrow
\P(\pi_{\ast} \widetilde \L)^{\vee} \times_T \P(\pi_{\ast}
\widetilde \L')^{\vee}$ lead to the commutative diagram
$$
\xymatrix{
& \P_{\Delta} \ar@{^(->}[d]\\
\P(\pi_{\ast} \widetilde \L)^{\vee}&\ar[l] \P(\pi_{\ast} \widetilde \L)^{\vee} \times_T \P(\pi_{\ast}
\widetilde \L')^{\vee} \ar[d] \ar[r]& \P(\pi_{\ast} \widetilde \L')^{\vee}\\
\widetilde \Sigma \ar@{^(->}[u]_j \ar[r]^\pi& T&\ar[l]_{\pi'} \widetilde \Sigma' \ar@{^(->}[u]^{j'}}
$$
Fibred product $\widetilde \Sigma \times _T \widetilde \Sigma' \hookrightarrow \P(\pi_{\ast} \widetilde \L)^{\vee} \times_T \P(\pi_{\ast}
\widetilde \L')^{\vee}$ gives rise to the intersection subscheme
$\widetilde \Sigma_{\Delta}= (\widetilde \Sigma \times _T
\widetilde \Sigma') \cap \P_{\Delta}.$ Now observe that there is a commutative
square
$$\xymatrix{\widetilde \Sigma \ar[d]_\pi&\ar[l] \widetilde \Sigma_\Delta \ar[d]\\
T &\ar[l]^{\pi'} \widetilde \Sigma'}
$$
where, as we have seen before, for any closed point $t\in T$ corresponding fibres of schemes
$\widetilde \Sigma$, $\widetilde \Sigma'$,
$\widetilde \Sigma_\Delta$ are identified isomorphically by arrows
of the diagram.

Also from the commutative diagram
$$\xymatrix{\widetilde \Sigma_\Delta \ar[d] \ar@{^(->}[r]^{j_\Delta} & \P_\Delta \ar[d]_\wr\\
\widetilde \Sigma \ar@{^(->}[r]^j& \P(\pi_{\ast} \widetilde \L)^{\vee}}
$$
we conclude that the left hand side vertical arrow is a closed immersion.
By the same reason there is a closed immersion $\widetilde \Sigma _\Delta
\hookrightarrow \widetilde \Sigma'.$
Now we make use of the following algebraic result.
\begin{proposition}\label{secflat}\cite[ch. 1, Proposition 2.5]{Milne} Let $B$
be flat $A$-algebra and $b\in B$. If the image of $b$ in $B/{\mathfrak m}B$
is not a zero divisor for any maximal ideal ${\mathfrak m}$ in $A$ then
$B/(b)$ is flat $A$-algebra.
\end{proposition}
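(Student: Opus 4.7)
The plan is to apply the local criterion of flatness. Because flatness of an $A$-module is detected one maximal ideal at a time, we may replace $A$ by its localization at an arbitrary maximal ideal $\mathfrak{m}$ and so assume $(A,\mathfrak{m},k)$ is local with residue field $k=A/\mathfrak{m}$. Under this reduction it is enough to verify that $\Tor_1^A(k, B/(b))=0$, whereupon the local criterion yields $A$-flatness of $B/(b)$.

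Starting from the short exact sequence of $A$-modules
$$
0\to bB\to B\to B/(b)\to 0
$$
and invoking $\Tor_1^A(k,B)=0$ (which holds because $B$ is $A$-flat), the long exact Tor sequence identifies $\Tor_1^A(k, B/(b))$ with the kernel of the natural map $\iota_{\ast}\colon k\otimes_A bB\to k\otimes_A B$ induced by the inclusion $\iota\colon bB\hookrightarrow B$. So the whole task reduces to proving that $\iota_{\ast}$ is injective.

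For this, observe that multiplication by $b$, viewed as an $A$-linear endomorphism $B\xrightarrow{b}B$, factors as $B\stackrel{\mu}{\twoheadrightarrow}bB\stackrel{\iota}{\hookrightarrow}B$. Applying $k\otimes_A(-)$ yields the factorization
$$
B/\mathfrak{m}B\stackrel{\mu_{\ast}}{\twoheadrightarrow}k\otimes_A bB\stackrel{\iota_{\ast}}{\longrightarrow}B/\mathfrak{m}B
$$
of multiplication by $\bar b$ on $B/\mathfrak{m}B$. By hypothesis $\bar b$ is not a zero divisor on $B/\mathfrak{m}B$, so this composite is injective; since $\mu_{\ast}$ is surjective, $\iota_{\ast}$ itself must be injective. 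Hence $\Tor_1^A(k, B/(b))=0$ and $B/(b)$ is $A$-flat.

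The only delicate point is the precise form of the local criterion one invokes, namely that flatness over a Noetherian local ring $(A,\mathfrak{m})$ is detected by vanishing of $\Tor_1^A(k,-)$. This is standard in the Noetherian setting in which the paper implicitly operates, and once granted, the remainder of the argument is purely a diagram chase on the factorization $B\twoheadrightarrow bB\hookrightarrow B$ and costs nothing beyond the functoriality of $\Tor$.
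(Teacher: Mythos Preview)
Your argument is correct and is essentially the standard proof via the local criterion of flatness: reduce to $A$ local, use the exact sequence $0\to bB\to B\to B/(b)\to 0$ together with flatness of $B$ to identify $\Tor_1^A(k,B/(b))$ with $\ker(k\otimes_A bB\to k\otimes_A B)$, and then kill this kernel by factoring multiplication by $b$ on $B/\mathfrak{m}B$ through $k\otimes_A bB$. Your caveat about the hypotheses needed for the local criterion (Noetherian, plus the implicit finiteness coming from the geometric setting so that one may further localize on $B$) is appropriate and honestly flagged.

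There is, however, nothing to compare against in the paper itself: the proposition is not proved there but merely quoted from Milne, \emph{\'Etale Cohomology}, Ch.~I, Proposition~2.5, and used as a black box. Your write-up is in fact the argument Milne gives (he phrases it via the sequence $0\to B\xrightarrow{b}B\to B/(b)\to 0$ after first observing that $b$ is a non-zero-divisor in $B$, which your factorization through $bB$ sidesteps). So your proposal supplies exactly the omitted proof, by the expected route.
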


Take a section $(s,s)$ of $\OO_{\P(\pi_{\ast}\widetilde \L)^{\vee}
\otimes_T \P(\pi'_{\ast}\widetilde \L')^{\vee}}$ and let
$b=(s', s'')$ be its image in $\OO_{\widetilde \Sigma} \otimes _T
\OO_{\widetilde \Sigma'}$. In our situation ${\mathfrak m}=
{\mathfrak m}_t$, and $b$ has an image in $\OO_{\pi^{-1}(t)}
\otimes_{k_t} \OO_{{\pi'}^{-1}(t)}$ which is not zero divisor.
Iterating the usage of the Proposition {\ref{secflat}} in regular
sequence \cite[ch. 1, Remark 2.6 d]{Milne}  one comes to
conclusion that $\widetilde \Sigma_\Delta$ is flat over $T$.
It rests to compare Hilbert polynomials of fibres over $t$ in the
exact triple
$$
0\to \II_{\widetilde \Sigma_\Delta, \widetilde \Sigma} \to
\OO_{j(\widetilde \Sigma)} \to \OO_{j_\Delta(\widetilde \Sigma_\Delta)} \to 0.
$$
Since $\OO_{\widetilde \Sigma}$ and $\OO_{\widetilde \Sigma_\Delta}$
are $T$-flat then $\II_{\widetilde \Sigma_\Delta, \widetilde \Sigma}$
is also $T$-flat. By  infinitesimal criterion of flatness
\cite{TimAA} fibrewise Hilbert polynomials
$\chi(\II_{\widetilde \Sigma_\Delta, \widetilde \Sigma} \otimes
\widetilde \L^n|_{j(\pi^{-1}(t))})$ do not depend on closed point $t\in T$.

We have $\chi(\OO(n)|_{j(\pi^{-1}(t))})=
\chi(\OO(n)|_{j_\Delta}(\pi_\Delta^{-1}(t)))$ and hence we conclude that
$\chi(\II_{\widetilde \Sigma_\Delta, \widetilde \Sigma} \otimes
\widetilde \L^n|_{j(\pi^{-1}(t))})=0$. Now $j$ and
$j_\Delta$ are identified under isomorphism
$\P(\pi_\ast \widetilde \L)^\vee = \P_\Delta.$ By the same reason
$j'$ and
$j_\Delta$ are identified under isomorphism
$\P(\pi'_\ast \widetilde \L')^\vee = \P_\Delta$ and hence
$\widetilde \Sigma \cong \widetilde \Sigma'$ and under this
identification also $\widetilde \L = j^{\ast}\OO(1) =
{j'}^\ast \OO(1)=\widetilde \L'$.

To confirm that also $\widetilde \E =\widetilde \E'$ we reason
in similar way and consider closed immersions of $T$-schemes
$$
\widetilde \Sigma \stackrel{j}{\hookrightarrow}
\Grass(\widetilde \V_m, r) = \Grass(\widetilde \V'_m, r) \stackrel{j'}{\hookleftarrow} \widetilde \Sigma'.
$$
Introduce shorthand notations $\G:=\Grass(\widetilde \V_m, r)$ and
$\G':=\Grass(\widetilde \V'_m, r)$ and form a fibred product
$\G \times _T  \G'$ together with the diagonal
$\G_\Delta \hookrightarrow \G \times _T  \G'$; and form the subscheme
$\widetilde \Sigma'_\Delta=(\widetilde \Sigma \times _T
\widetilde \Sigma') \cap \G_\Delta.$ As previously, there is a
commutative square
$$\xymatrix{\widetilde \Sigma'_\Delta \ar[d] \ar@{^(->}[r]^{j_\Delta} & \G_\Delta \ar[d]_\wr\\
\widetilde \Sigma \ar@{^(->}[r]^j& \G}
$$
from which
$\widetilde \Sigma'_\Delta \hookrightarrow \widetilde \Sigma$ as
closed subscheme. Applying Proposition \ref{secflat} we conclude
that $\widetilde \Sigma'_\Delta$ is flat over $T.$ Since fibres of
schemes $\widetilde \Sigma'_\Delta$ and $\widetilde \Sigma$  over
same closed point $t\in T$ coincide they have equal Hilbert
polynomials as subschemes in Grassmann variety $\G_t\cong G(rp(m), r).$
Hence $j_\Delta(\widetilde \Sigma'_\Delta )=j(\widetilde \Sigma)$
under identification $\G_\Delta =\G$ and
also $j_\Delta(\widetilde \Sigma'_\Delta )=j'(\widetilde \Sigma')$
under identification $\G_\Delta =\G'.$ Now let
$\pi^\ast \widetilde \V_m\twoheadrightarrow \QQ$ be the universal
quotient bundle on $\G=\G'$. Then
$\widetilde \E \otimes \widetilde \L^m=j^{\ast}\QQ =
{j'}^{\ast} \QQ =\widetilde \E' \otimes \widetilde L^m$ and hence
$\widetilde \E =\widetilde \E'.$

\end{document}